\documentclass{amsart}

\usepackage[all]{xy}
\usepackage[latin1]{inputenc}
\usepackage{hyperref}
\usepackage{amssymb}
\usepackage{tensor}
\hfuzz 10pt \vfuzz 10pt

\usepackage{nicefrac}

\newtheorem*{Maintheorem*}{Main Theorem}
\newtheorem*{theorem*}{Theorem}

\newtheorem{theorem}{Theorem}

\newtheorem{lemma}{Lemma}
\newtheorem{remark}{Remark}
\newtheorem{proposition}{Proposition}
\newtheorem{corollary}{Corollary}
\newtheorem*{corollary*}{Corollary}

\newcommand{\Oh}{\mathcal{O}}
\newcommand{\C}{\mathbb{C}}
\newcommand{\R}{\mathbb{R}}

\newcommand{\PR}{\mathbb{P}}

\newcommand{\la}{\alpha}
\newcommand{\lb}{\overline{\beta}}
\newcommand{\ld}{\overline{\delta}}
\newcommand{\lc}{\gamma}
\newcommand{\ls}{\sigma}
\newcommand{\lr}{\ovl{\tau}}

\newcommand{\lam}{\lambda}

\newcommand{\tr}{\operatorname{tr}}

\newcommand{\ovl}{\overline}
\newcommand{\dbar}{\bar \partial}
\newcommand{\dl}{ \partial}

\renewcommand\>{\rangle}

\newcommand{\X}{\mathcal{X}}

\newcommand{\jbar}{\ovl{\jmath}}

\newcommand{\bbar}{\ovl{b}}
\newcommand{\lbar}{\ovl{l}}

\newcommand{\sbar}{\ovl{s}}
\newcommand{\labar}{\ovl{\alpha}}
\newcommand{\lbbar}{\ovl{\beta}}

\newcommand{\Ric}{\operatorname{Ric}}
\newcommand{\Aut}{\operatorname{Aut}}
\newcommand{\MA}{\operatorname{MA}}

\begin{document}

\title{An approach to Griffiths conjecture}

\author{Philipp Naumann}
\address{Philipp Naumann, Université de Grenoble I,
Institut Fourier, 38402 Saint-Martin d'Hères, France}
\email{Philipp.Naumann@univ-grenov-alpes.fr}

\thanks{}

\subjclass[2000]{14D99, 32Q20, 53C44, 53C55}

\keywords{Positivity, K\"ahler-Ricci flow, Families, Fibrations}

\date{}

\begin{abstract}
The Griffiths conjecture asserts that every ample vector bundle $E$ over a compact complex manifold $S$ admits a hermitian metric with positive curvature in the sense of Griffiths. In this article we give a sufficient condition for a positive hermitian metric on $\Oh_{\PR(E^*)}(1)$ to induce a Griffiths positive $L^2$-metric on the vector bundle $E$. This result suggests to study the relative K\"ahler-Ricci flow on $\Oh_{\PR(E^*)}(1)$ for the fibration $\PR(E^*)\to S$. We define a flow and give arguments for the convergence.
\end{abstract}

\maketitle

\section{Introduction}
Let $E \to S$ be a holomorphic vector bundle of rank $r$ over a compact complex manifold $S$. The projectivized bundle 
$\X:=\PR(E^*)$ carries the tautological line bundle $\Oh_{\PR(E^*)}(1)$ which we also denote by $\Oh_E(1)$. There is an isomorphism $f_*(\Oh(1)) \cong E$ where $f: \X \to S$ is the projection map. 
In \cite{Ha66} Hartshorne defined the ampleness of a vector bundle over a projective manifold: A vector bundle is called ample if the tautological line bundle $\Oh_E(1)$ is ample over $\X$. On the other hand, Griffiths defined in \cite{Gr69} a notion of positivity for a hermitian holomorphic vector bundle $(E,H)$ by using the curvature of $(E,H)$ which is now called Griffiths positivity. For a line bundle the ampleness of the bundle is equivalent to Griffiths positivity. It is also well-known that a Griffiths positive metric on $E$ induces a positive metric on $\Oh_E(1)$, hence any Griffiths positive vector bundle is always ample. Griffiths conjectured also the converse, that an ample vector bundle $E$ also carries a Griffiths positive metric. Umemura \cite{Um73} as well as Campana and Flenner \cite{CP90} gave an affirmative answer to this question when the base $S$ is a curve. But in general finding a Griffiths positive metric on an ample vector bundle seems to be very difficult and is worth to be investigated. 

Another important contribution which points into the direction of this conjecture comes from the general positivity theory for direct images of adjoint bundles.  Berndtsson proved in \cite{Be09} that the bundle $E \otimes \det E$ has always a hermitian metric which is Nakano positive for any given ample vector bundle $E$. Together with the result of Demailly and Skoda \cite{DS79} which says that $(E\otimes \det E, H\otimes \det H)$ is Nakano positive if $(E,H)$ is Griffiths positive, this can be seen as a further indication for the Griffiths conjecture. The Nakano positivity of $E \otimes \det E$ follows from Berndtsson's main theorem by the identity $f_*(K_{\X/S} \otimes \Oh(r+1)) \cong E \otimes \det E$. This article is motivated by his result. In order to study $E$ instead of $E  \otimes \det E$ we look at $f_*(\Oh(1)) \cong E$ which is no longer the direct image of an adjoint bundle (an ample twisted relative canonical bundle). 
\newpage
Now assume that $E$ is ample and choose a positive hermitian metric $h$ on $\Oh_E(1)$. We define the positive form
$$
\omega_{\X}:= -\sqrt{-1}\dl\dbar\log h.
$$
This gives K\"ahler forms 
$$
\omega_s:=\omega_{\X}|_{X_s}
$$ 
on the fibers $X_s$ which induce a hermitian metric on $\det T_{\X/S}=K_{\X/S}^{-1}$ denoted by 
$$
(- \sqrt{-1}\dl \dbar \log h)^n,
$$
where $n=\dim X_s=r-1$.
We define the $L^2$-inner product of two sections $u,v \in H^0(X_s,\Oh_{\PR(E_s^*)}(1)) \cong E_s$ on the direct image $f_*(\Oh(1)) \cong E$ at a point $s \in S$ by
\begin{equation}
\label{L2}
\<u,v\>_{L^2}(s) := \int_{X_s}{h(u,v)\, \frac{\omega_s^n}{n!}}.   
\end{equation}
When the hermitian metric $h$ is induced from a hermitian metric $H$ on $E$, we already know from \cite[Theorem 7.1]{LSY13} that the $L^2$-metric then gives back the original metric $H$ on $E$ up to a constant factor. In particular we know that if we start with a Griffiths positive metric $H$ on $E$, it should be possible to prove the Griffiths positivity of the $L^2$-metric directly by looking at its curvature expression. By analyzing this situation very carefully and using a general curvature formula for direct images due to To and Weng \cite{TW03}, we are able to prove the following result:
\begin{theorem}
\label{thm}
If the canonical isomorphism
\begin{equation}
\label{iso}
K_{\X/S}^{-1}\cong \Oh_E(r) \otimes f^*\det(E)^{-1}
\end{equation}
becomes an isometry for some hermitian metric on $f^*\det(E)^{-1}$ that is the inverse of the pullback of a hermitian metric $G$ on $\det E$, then the $L^2$-metric on $f_*(\Oh(1))=E$ is Griffiths positive. 
\end{theorem}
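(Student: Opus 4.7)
The plan is to reinterpret the $L^2$-metric (\ref{L2}) as the natural inner product on a direct image of a twisted relative canonical bundle, then apply the To--Weng curvature formula and exploit the isometry hypothesis to identify Griffiths positivity. From the Euler-sequence isomorphism (\ref{iso}) one has
\[ \Oh_E(1) \;\cong\; K_{\X/S} \otimes L, \qquad L \;:=\; \Oh_E(r+1) \otimes f^*\det(E)^{-1}, \]
so that by the projection formula $f_*\Oh_E(1) \cong f_*(K_{\X/S}\otimes L)$. The isometry hypothesis, written as $\omega_\X^n/n! = h^r \cdot f^*G^{-1}$ on $K_{\X/S}^{-1}$, is exactly the statement that under the displayed isomorphism the metric $h$ on $\Oh_E(1)$ decomposes as the product of the metric on $K_{\X/S}$ dual to $\omega_\X^n/n!$ with the product metric $h_L := h^{r+1}\otimes f^*G^{-1}$ on $L$. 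A direct frame calculation then shows that the $L^2$-metric (\ref{L2}) coincides with the standard $L^2$-metric on $f_*(K_{\X/S}\otimes L)$ in which sections of $\Oh_E(1)$ are viewed as $L$-valued $(n,0)$-forms on the fibers. Applying $-\sqrt{-1}\partial\bar\partial\log$ to the isometry identity yields the key K\"ahler--Einstein-type relation
\[ \Ric(\omega_\X^n/n!) \;=\; r\,\omega_\X - f^*\Theta_G, \]
whose restriction to each fiber $X_s$ reads $\Ric(\omega_s) = r\omega_s$.

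Next I would apply the curvature formula of \cite{TW03} to $f_*(K_{\X/S} \otimes L)$ equipped with $h_L$ and fiber volume $\omega_s^n/n!$. For a local holomorphic frame $u$ of $E$ and a base tangent $\xi \in T_sS$, the formula expresses $\langle \sqrt{-1}\Theta^{L^2}(\xi,\bar\xi)u,u\rangle_{L^2}$ as a sum of (i) a fiber integral of $\Theta_L(\xi^h,\bar\xi^h)\,|u|_{h_L}^2$ against $\omega_s^n/n!$, (ii) a manifestly nonnegative horizontal-lift contribution quadratic in the Kodaira--Spencer representative, and (iii) minus a term involving the $\laplacedbar$-harmonic projection of the primitive part -- the only possibly negative contribution. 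Substituting the isometry identity into the curvature of $L$ gives
\[ \Theta_L \;=\; (r+1)\omega_\X - f^*\Theta_G \;=\; \omega_\X + \Ric(\omega_\X^n/n!), \]
so that the troublesome base-direction twist $-f^*\Theta_G$ is traded for the Ricci of the fiber volume, and in particular $\Theta_L|_{X_s} = (r+1)\omega_s$ is fiberwise strictly positive. The remaining task is to verify that with this substitution the entire To--Weng expression becomes a nonnegative Hermitian form in $\xi\otimes u$, strictly positive when both factors are nonzero.

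I expect the main obstacle to lie precisely in this last cancellation. Without the isometry, Berndtsson's theorem applied to $f_*(K_{\X/S}\otimes \Oh_E(r+1))$ merely yields Nakano positivity of $E\otimes\det E$, from which no Griffiths statement about $E$ itself can be extracted by a purely formal twist. The role of the isometry hypothesis is to force the fiberwise K\"ahler--Einstein equation $\Ric(\omega_s)=r\omega_s$, which via integration by parts on the fiber is exactly the relation needed to absorb the harmonic-projection term (iii) into the sum of (i) and (ii); the delicate bookkeeping is to track that every would-be-negative contribution produced by the curvature of $f^*\det(E)^{-1}$ is compensated by a matching Ricci term generated by the same isometry. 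Carrying out this compensation term-by-term, and confirming strict positivity whenever $\xi$ and $u$ are both nonzero, is where the main effort of the proof will go.
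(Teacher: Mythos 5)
Your setup is sound as far as it goes: the reformulation $\Oh_E(1)\cong K_{\X/S}\otimes L$ with $L=\Oh_E(r+1)\otimes f^*\det(E)^{-1}$ is compatible with the $L^2$-metric (\ref{L2}) under the isometry hypothesis, and you correctly extract the fiberwise K\"ahler--Einstein equation $\Ric(\omega_s)=r\omega_s$. But the proof stops exactly where the theorem's content begins. Your plan is to show that the To--Weng (or Berndtsson-type) expression becomes nonnegative by ``absorbing'' the harmonic-projection term into the other two via the K\"ahler--Einstein relation, and you defer this ``term-by-term compensation'' as the main remaining effort. No such sign-by-sign compensation exists: after the negative term is disposed of, the surviving term is $\int_{X_s}\bigl(c(\varphi)+\Box_{\omega_s}c(\varphi)\bigr)\langle t_a,t_b\rangle\,\omega_s^n/n!$, and $\Box_{\omega_s}c(\varphi)$ has no sign; equivalently, in your formulation one is left needing $\int\bigl((r+1)c(\varphi)-f^*\Theta_G\bigr)|u|^2\geq 0$ where $f^*\Theta_G$ is $r$ times the fiber average of $c(\varphi)$ (up to normalization), and this fails as an inequality for a general positive function $c(\varphi)$ against a general $|u|^2$. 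Griffiths positivity here is an exact identity, not an estimate.

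The two ingredients your proposal is missing are the ones the paper actually proves. First, the isometry forces the Kodaira--Spencer forms $A_i$ to be harmonic, hence zero (since $H^1(\PR^n,T_{\PR^n})=0$); this kills the Green's-operator term outright rather than requiring it to be compensated. Second, and crucially, the elliptic identity $(\Box_{\omega_s}-r)c(\varphi)=-f^*(R^{\det E}_G)$ then says that $rc(\varphi)$ minus its average is an eigenfunction for the \emph{first} positive eigenvalue $r=n+1$ of the Fubini--Study Laplacian, so $c(\varphi)=\sum_{\alpha,\beta}\lambda_{\labar\beta}W_\beta \ovl{W}_{\alpha}/|W|^2$ for a constant Hermitian matrix $(\lambda_{\labar\beta})$. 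Plugging this into the curvature formula and evaluating the explicit Fubini--Study moment integrals $\int W_{\la}\ovl{W}_{\lb}/|W|^2$ and $\int W_{\la}\ovl{W}_{\lb}W_{\lc}\ovl{W}_{\ld}/|W|^4$ yields the exact identity $\Theta_{i\jbar s\sbar}=\lambda_{i\jbar}\,\delta/r!$, so that Griffiths positivity of the $L^2$-metric is literally the positivity of the function $c(\varphi)=\sum\lambda_{i\jbar}W_i\ovl{W}_j/|W|^2$, i.e.\ the assumed positivity of $h$. Without the spectral identification of $c(\varphi)$ and these exact integral evaluations, the positivity cannot be reached, so the proposal as written has a genuine gap at its central step.
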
 

We give the proof in the next section. 
The relation (\ref{iso}) stated in Theorem \ref{thm} implies that the curvature form of $h$ defines K\"ahler-Einstein metrics on the fibers, which are projective spaces. But the Fubini-Study metrics on $\PR(E^*_s)$ are in one-to-one correspondence with hermitian structures on  $E^*_s$, hence the theorem should actually give a characterisation of those metrics on $\Oh_E(1)$ which are induced by metrics on $E$. This gives a link between the K\"ahler-Einstein problem on projective spaces and the Griffiths conjecture. Therefore we propose in section 3 to study the relative K\"ahler-Ricci flow on the bundle $\Oh_E(r)$ instead of $K_{\X/S}^{-1}$. Here we rely on Berman's article \cite{Ber13} and first recall his definition for the normalized relative K\"ahler-Ricci flow on the level of hermitian metrics for the bundle $K_{\X/S}^{-1}$. By using more recent results for the K\"ahler-Ricci flow on Fano K\"ahler-Einstein manifolds, we explain briefly how to extend his result on the convergence of the flow to the case of non-discrete automorphism groups of the fibers. Afterwards we define a flow on $\Oh_E(r)$ and give arguments for its convergence.  Finally we state the evolution equation for the geodesic curvature function of the evolving metrics on $\Oh_E(r)$ which completely encodes its positivity. 
\vfill
{\bf Acknowledgements } The author would like to thank Tristan Collins and Sébastien Boucksom for discussions about the convergence of the K\"ahler-Ricci flow in the Fano case.

\section{Griffiths positivity of $L^2$-metrics}
In this section we prove Theorem \ref{thm}. First we introduce the main objects which appear along the computation of the curvature. We cite the formula of To and Weng and prove the Propositions \ref{harm} and \ref{elleq} that allow to simplify the curvature expression. It will also clarify the structure of the geodesic curvature function that finally allows to evaluate the curvature integrals.
 
We use local holomorphic coordinates $(s^k)$ on the base $S$ and coordinates $(z^{\la})$ on the fibers and write
$$
\omega_{\X}= \sqrt{-1}\left( g_{\la\lb}\, dz^{\la} \wedge dz^{\lb} + h_{k\lb}\, ds^{k}\wedge dz^{\lb} 
+ h_{\la \lbar}\, dz^{\la} \wedge ds^{\lbar} + h_{k\lbar}\, ds^k \wedge ds^{\lbar} \right)
$$ 
Thus the K\"ahler forms are given by 
$$
\omega_s:= \sqrt{-1}\, g_{\la\lb}\, dz^{\la} \wedge d z^{\lb}
$$ 
and the induced metric on  $K_{\X/S}^{-1}$ can be written as
$$
\det (g_{\la\lb}).
$$
According to \cite{Sch93} we denote the horizontal lift of a tangent vector $\dl_k=\dl/\dl s^k$ on the base $S$ by $v_k$. It is given by
$$
v_k = \dl_k + a_k^{\la}\,\dl_{\la}
$$
where $\dl^{\la}=\dl/\dl z^{\la}$ and 
$$
a_k^{\la}=-g^{\lb\la}\, h_{k\lb}.
$$
We obtain the Kodaira-Spencer forms by
$$
A_k := \dbar(v_k)|_{X_s}
$$
and define the \emph{geodesic curvature} in the direction of $k,l$ by
$$
c(\varphi)_{k\lbar} = \<v_k,v_l\>_{\omega_{\X}}. 
$$
Here we differ slightly in notation from \cite{Sch93} and use instead the notation from \cite{Be11} to indicate that $c(\varphi)$ depends on $h$ where locally $h=e^{-\varphi}$. 
In our local coordinates we have
$$
c(\varphi)_{k\lbar} = h_{k\lbar} - a_k^{\ls}a_{\lbar\ls}.
$$
We observe that the matrix $(c(\varphi)_{k\lbar})$ is positive definite iff the hermitian line bundle $(\Oh_E(1),h)$ is positive which was our assumption.

The main ingredient in the proof of Theorem \ref{thm} is a general curvature formula for the direct images of the form $p_*L$ due to To and Weng: 
\begin{theorem*}[\cite{TW03}]
Let $p: \X \to S$ be a smooth family of $n$-dimensional compact complex manifolds and $(L,h) \to \X$ be a hermitian holomorphic line bundle. Furthermore assume that $\X$ admits a smooth $(1,1)$-form $\omega_{\X}$ such that its restrictions $\omega_{s}:=\omega_{\X}|_{X_s}$ are K\"ahler forms on $X_s$ and such that 
$$
c_1(L,h)=\frac{k}{2 \pi} \omega_{\X}
$$   
for some $k \in \R$. Assume that $p_*L$ is locally free on $S$ with fiber $(p_*L)s=H^0(X_s,L_s)$. Then the curvature tensor $\Theta$ of the associated $L^2$-metric (defined as in (\ref{L2})) on $p_*L$ is given by
\begin{eqnarray*}
\Theta_{a\bbar i \jbar}(s) = &-& \int_{X_s}{\<G(A_{i\lbbar}^{\lc}t_{a;\lc}dz^{\lbbar}), A_{\jbar \lbbar}^{\lc}t_{b;\lc}dz^{\lbbar} \>_h
\frac{\omega_s^n}{n!}}\\
&+& \int_{X_s}{(kc(\varphi)_{i\jbar} + \Box_{\omega_s} c(\varphi)_{i\jbar})\<t_a,t_b\>_h\frac{\omega_s^n}{n!}}
\end{eqnarray*} 
for $t_a,t_b \in H^0(X_s,L_s)$. Here $G$ denotes the Green's operator.
\end{theorem*}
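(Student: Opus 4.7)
The plan is to compute $\dl_i\dl_{\jbar}H_{a\bbar}$ for the Gram matrix $H_{a\bbar}(s) = \int_{X_s} h(\tilde t_a, \tilde t_b)\,\omega_s^n/n!$ of the $L^2$-inner product in a holomorphic frame, and then read off the curvature. Fix $s_0\in S$ and, using that $p_*L$ is locally free, lift local holomorphic sections $t_a$ of $p_*L$ near $s_0$ to holomorphic sections $\tilde t_a$ of $L$ on $p^{-1}(U)$. In the holomorphic frame $\{t_a\}$ the Chern curvature has the standard expression
$$
\Theta_{i\jbar, a\bbar} \;=\; -\dl_i\dl_{\jbar}H_{a\bbar} \;+\; H^{c\ldbar}\,(\dl_i H_{a\ldbar})(\dl_{\jbar}H_{c\bbar}).
$$
Writing $\varphi=-\log h$, the hypothesis $c_1(L,h)=\frac{k}{2\pi}\omega_{\X}$ reads $\sqrt{-1}\,\dl\dbar\varphi=k\,\omega_{\X}$, so contractions of $\dl\dbar\varphi$ against the horizontal lifts $v_i=\dl_i+a_i^{\la}\dl_{\la}$ produce exactly $k\,c(\varphi)_{i\jbar}$.

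To differentiate the fiber integral I would use Lie derivatives $\mathcal{L}_{v_i},\mathcal{L}_{v_{\jbar}}$ along the horizontal lifts, the standard device for commuting $s$-derivatives with integration over the closed fiber. The first derivative $\dl_{\jbar}H_{a\bbar}$ splits into three contributions according to whether $\mathcal{L}_{v_{\jbar}}$ acts on $h$, on the relative volume form $\omega_s^n/n!$, or on $\tilde t_b$. The crucial Kodaira--Spencer input appears in the last one: although $\tilde t_b$ is holomorphic on $\X$, the horizontal lift $v_{\jbar}$ is only smooth, and $\dbar(v_{\jbar})|_{X_s}=A_{\jbar}$ identifies the fiberwise $(0,1)$-component of $\nabla_{v_{\jbar}}\tilde t_b$ with the $L_s$-valued form $A_{\jbar\lbbar}^{\lc}\,t_{b;\lc}\,dz^{\lbbar}$, and symmetrically $\dl_i$ acting on $\tilde t_a$ yields $A_{i\lbbar}^{\lc}\,t_{a;\lc}\,dz^{\lbbar}$.

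For the second derivative I would organize the terms by where $\dl_i$ and $\dl_{\jbar}$ land: (a) both hit $\varphi$, producing $k\,c(\varphi)_{i\jbar}\langle t_a,t_b\rangle_h$ via the curvature hypothesis; (b) both hit the relative volume form, producing $(\Box_{\omega_s}c(\varphi)_{i\jbar})\langle t_a,t_b\rangle_h$ through Schumacher's variational formula for $\omega_s^n/n!$ along horizontal lifts; (c) one hits $\tilde t_a$ and the other $\tilde t_b$, giving the pointwise pairing of the Kodaira--Spencer contractions from the previous paragraph. Cross terms where one derivative falls on a section and the other on $h$ or on the volume form are fiberwise $\dbar$-exact and vanish after integration by Stokes on the closed fiber $X_s$. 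The quadratic correction $H^{c\ldbar}(\dl_i H_{a\ldbar})(\dl_{\jbar}H_{c\bbar})$ in the Chern curvature formula cancels precisely the $\dbar$-harmonic component of the type (c) terms, since at $s_0$ in the holomorphic frame $\dl_i H_{a\ldbar}$ is the $L^2$-pairing of the harmonic projection of $A_i\cdot t_a$ against $t_d$. What survives is the orthogonal complement, where $\laplacedbar$ is invertible with inverse $G$, producing the stated expression $\langle G(A_{i\lbbar}^{\lc}t_{a;\lc}dz^{\lbbar}), A_{\jbar\lbbar}^{\lc}t_{b;\lc}dz^{\lbbar}\rangle_h$.

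The main obstacle is the bookkeeping in step (c) together with the verification that every term is accounted for by one of (a), (b), the quadratic $H^{-1}\dl H\,\dl H$ correction, or a fiberwise exact divergence. A clean execution works with $L$-twisted forms on the total space $\X$, recognizes the curvature contributions through $\sqrt{-1}\dl\dbar\varphi=k\,\omega_{\X}$, and invokes the Hodge decomposition for $L_s$-valued $(0,1)$-forms on each fiber $X_s$ so that the harmonic piece matches the quadratic correction and the orthogonal piece produces the Green's operator term.
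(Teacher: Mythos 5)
First, a point of order: the paper does not prove this statement at all --- it is quoted from To--Weng \cite{TW03} and used as a black box in the proof of Theorem \ref{thm} --- so there is no internal proof to compare against, and your proposal must be judged on its own terms. Its architecture (differentiate the Gram matrix $H_{a\bbar}$ twice along the horizontal lifts $v_i$, $v_{\jbar}$, sort the resulting terms, and use fiberwise Hodge theory to convert the non-harmonic part into a Green's-operator pairing) is the standard route to formulas of this kind and is the right one.

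However, the execution has genuine gaps at exactly the two steps that carry the content of the theorem. (i) Your identification of the Kodaira--Spencer contribution is type-incorrect: since $\tilde t_b$ is holomorphic, $\nabla_{v_{\jbar}}\tilde t_b=(\dbar\tilde t_b)(v_{\jbar})=0$, so it has no ``fiberwise $(0,1)$-component'' to identify with $A_{\jbar\lbbar}^{\lc}t_{b;\lc}dz^{\lbbar}$; likewise $A_{i\lbbar}^{\lc}t_{a;\lc}dz^{\lbbar}$ is an $L_s$-valued $(0,1)$-form and cannot be $L^2$-paired against the section $t_d$, so your description of $\dl_i H_{a\ldbar}$ and of the cancellation with the quadratic correction does not parse as stated. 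The object that actually carries the Kodaira--Spencer data is the section $\nabla_{v_i}\tilde t_a|_{X_s}$, through the identity $\dbar_{X_s}\bigl(\nabla_{v_i}\tilde t_a\bigr)=A_{i\lbbar}^{\lc}t_{a;\lc}dz^{\lbbar}$. Proving this identity uses $\dbar\bigl(v_i\ctr\nabla^{1,0}\tilde t_a\bigr)=(\dbar v_i)\ctr\nabla^{1,0}\tilde t_a-v_i\ctr\bigl(\Theta_L\tilde t_a\bigr)$, and the last term dies on the fiber only because $\Theta_L$ is proportional to $\omega_{\X}$ and $v_i$ is $\omega_{\X}$-horizontal; in other words, the hypothesis $c_1(L,h)=\frac{k}{2\pi}\omega_{\X}$ must be invoked a \emph{second} time, at this very step, and your outline never does --- for a general $(L,h)$ the first term of the formula would look different. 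Once this identity is available, one decomposes $\nabla_{v_i}\tilde t_a$ into its $H^0(X_s,L_s)$-part (which is what cancels the quadratic correction, or vanishes in a normal frame at $s_0$) plus $\dbarstar G\bigl(A_{i\lbbar}^{\lc}t_{a;\lc}dz^{\lbbar}\bigr)$, and the pairing of the orthogonal parts yields the Green's-operator term via $\<\dbarstar G\alpha,\dbarstar G\beta\>_{L^2}=\<G\alpha,\beta\>_{L^2}$ for $\dbar$-closed $\alpha,\beta$. (ii) The term $\Box_{\omega_s}c(\varphi)_{i\jbar}$ is asserted to come from ``Schumacher's variational formula'' for the volume form, but this is precisely the second substantive computation of the theorem and is nowhere carried out; moreover, in the bookkeeping you set up, commuting $\nabla_{v_{\ibar}}$ past $\nabla_{v_j}$ produces not only the curvature term of your case (a) but also a term $\nabla_{[v_{\ibar},v_j]}\tilde t_b$ coming from the vertical commutator of the horizontal lifts (essentially the fiberwise gradient of $c(\varphi)_{i\jbar}$), which contributes to $\Box_{\omega_s}c(\varphi)_{i\jbar}$ after integration by parts and appears in none of your cases (a), (b), (c). As it stands, the proposal is a correct road map, not a proof.
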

The formula in this general form is of course difficult to deal with. We prove the following two results (cf. \cite{Sch93}) that allow considerable simplifications in our setting: 
\begin{proposition}
\label{harm}
Under the assumption of Theorem \ref{thm}, the Kodaira-Spencer forms $A_i$ are harmonic, hence zero.
\end{proposition}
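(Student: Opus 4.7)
\textbf{Proof plan for Proposition~\ref{harm}.}

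The argument falls into three steps. \emph{First}, I extract a fiberwise K\"ahler--Einstein condition from the isometry hypothesis of Theorem~\ref{thm}. The metric on $K_{\X/S}^{-1}$ induced by $\omega_{\X}$ is $\det(g_{\la\lb})$, so locally the hypothesis reads
\begin{equation*}
\det(g_{\la\lb}) \;=\; h^{\,r}\cdot f^*(G^{-1}).
\end{equation*}
Applying $-\sqrt{-1}\,\dl\dbar \log(\cdot)$ and restricting to pure fiber directions annihilates the pullback $f^*G^{-1}$, and yields $\Ric(\omega_s) = r\,\omega_s$ on each fiber $X_s \cong \PR^{r-1}$. Thus every fiber metric is K\"ahler--Einstein with positive Einstein constant equal to~$r$.

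\emph{Second}, using this K\"ahler--Einstein condition, I show that each Kodaira--Spencer form $A_i = \dbar(v_i)|_{X_s}$ is harmonic along $X_s$. Since $A_i$ is automatically $\dbar$-closed, it suffices to verify $\dbarstar A_i = 0$ with respect to $\omega_s$. This is a Schumacher-type computation adapted to the Fano setting: differentiating the pointwise identity $\dl_\la \dbar_\lb \log\det(g) = -r\,g_{\la\lb}$ in a base direction $\dl_k$, and using the defining relation $a_k^\la = -g^{\lb\la}h_{k\lb}$ together with $h_{\la\lb} = g_{\la\lb}$, one obtains that the covariant fiber-divergence of $A_k$ vanishes identically. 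This divergence is precisely $\dbarstar A_k$ computed with respect to $\omega_s$.

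\emph{Third}, projective space is rigid: from the Euler sequence and the standard vanishing $H^i(\PR^n, \Oh(k)) = 0$ for $0 < i < n$ and $k \geq 0$, one has
\begin{equation*}
H^1(X_s, T_{X_s}) \;=\; H^{0,1}\bigl(\PR^{r-1}, T_{\PR^{r-1}}\bigr) \;=\; 0.
\end{equation*}
Hodge theory on each fiber then forces the space of harmonic $T_{X_s}$-valued $(0,1)$-forms to be trivial, and combined with the previous step this gives $A_i \equiv 0$.

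The technical heart is the second step: the identity $\dbarstar A_k = 0$ must be teased out of the fiberwise K\"ahler--Einstein equation through a careful bookkeeping of mixed partials of the potential $\varphi = -\log h$, with the base-direction data entering through $h_{k\lb} = \varphi_{k\lb}$ and thence through the horizontal coefficients $a_k^\la$. Performed at a point in K\"ahler normal coordinates on the fiber the computation is short, but writing it out cleanly in global form is the main work.
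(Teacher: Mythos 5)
Your plan follows the paper's own proof of Proposition~\ref{harm} essentially verbatim: the paper likewise establishes harmonicity by showing the fiberwise divergence $g^{\ld\lc}A_{i\lb\ld;\lc}$ vanishes, combining the commutation of covariant derivatives (which brings in the K\"ahler--Einstein identity $R_{\la\lb}=(n+1)g_{\la\lb}$ on the fibers) with the global relation $\log\det(g)=\log\bigl(h^{n+1}\cdot (f^*G)^{-1}\bigr)$, whose fiber derivatives kill the pullback term. Your third step, the rigidity $H^1(\PR^{r-1},T_{\PR^{r-1}})=0$, correctly supplies the ``hence zero'' that the paper's proof leaves implicit.
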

\begin{proof}
We use the symbol $;$ to denote covariant derivatives. 
On the fiber $X_s$ we have
\begin{eqnarray*}
g^{\ld\lc}A_{i\lb \ld;\lc} &=& g^{\ld \lc}a_{i\ld;\lb\lc} = g^{\ld \lc}a_{i\ld;\lc\lb} - g^{\ld \lc} a_{i\lr}R^{\lr}_{\ld\;\lb \lc}\\
&=& \left(g^{\ld \lc}\dl_{\lc}\left(\frac{\dl^2 \log h}{\dl s^{i} \dl z^{\ld}}\right)\right)_{;\lb} + a_{i\lr}R^{\lr}_{\lb}\\
&=& \left(g^{\ld \lc}\dl_i\left(\frac{\dl^2 \log h}{\dl s^{\lc} \dl z^{\ld}} \right) \right)_{;\lb}+ a_{i\lr} g^{\lr \la} R_{\la\lb}\\
&=& \left(-g^{\ld \lc} \dl_i(g_{\lc \ld}) \right)_{;\lb} + a_i^{\la}(n+1)g_{\la \lb} \\
&=& -\dl_{\lb}\dl_i\log \det(g) + (n+1)a_{i\lb} \\
&=& -\frac{\dl^2 \log \det(g)}{\dl s^i \dl z^{\lb}} + (n+1)\frac{\dl^2 \log h}{\dl s^i \dl z^{\lb}}\\
&=& -\frac{\dl^2 \log (h^{n+1}\cdot f^*G^{-1})}{\dl s^i \dl z^{\lb}} + \frac{\dl^2 \log h^{n+1}}{\dl s^i \dl z^{\lb}}\\
&=& -\frac{\dl^2 \log h^{n+1}}{\dl s^i \dl z^{\lb}} + \frac{\dl^2 \log h^{n+1}}{\dl s^i \dl z^{\lb}}=0,
\end{eqnarray*}
because by our assumption $(\det T_{\mathcal{X}/S},\det(g)) \cong (\mathcal{O}(n+1)\otimes f^*(\det E^*),h^{n+1}\cdot (f^*G)^{-1})$ and the pullback metric $f^*G^*$ is constant on the fibers.
\end{proof}
\begin{proposition}
\label{elleq}
Under the assumption of Theorem \ref{thm}, the following equation holds:
\begin{equation}
\label{ell}
(\Box_{\omega_s} -r)c(\varphi)_{k\lbar} = A_k\cdot A_{\lbar} - f^*(R^{\det E}_G)_{k\lbar},
\end{equation}
where $(R^{\det E}_{G})_{k\lbar}$ is the curvature of $(\det E,G)$ in the direction of $\dl_k$ and $\dl_l$. 
\end{proposition}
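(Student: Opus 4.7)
The identity (\ref{ell}) follows from a direct fiberwise computation of $\Box_{\omega_s}c(\varphi)_{k\lbar}$ in the local coordinates $(s^k,z^\la)$ used in Proposition~\ref{harm}, in the spirit of Schumacher's computations in \cite{Sch93}. The hypothesis of Theorem~\ref{thm} enters only through the scalar identity
$$\log\det(g_{\la\lb})=-r\,\varphi-f^*\log G,$$
which upon differentiating twice in fiber directions gives $\Ric(\omega_s)=r\,\omega_s$ (so the fibers are K\"ahler--Einstein with constant $r$), and upon differentiating once in a base and once in a fiber direction gives
$$\partial_k\partial_\lbar\log\det(g_{\la\lb})=-r\,h_{k\lbar}+f^*(R^{\det E}_G)_{k\lbar}.$$
These are the only two consequences of the isometry hypothesis that enter the proof.

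Splitting $c(\varphi)_{k\lbar}=h_{k\lbar}-a_k^\sigma a_{\lbar\sigma}$, I would compute $\Box_{\omega_s}$ of each summand separately. For $\Box_{\omega_s}h_{k\lbar}$, commuting the base derivatives $\partial_k\partial_\lbar$ through the vertical Laplacian converts $-g^{\lbbar\la}\partial_\la\partial_\lbbar h_{k\lbar}$ into $-\partial_k\partial_\lbar\log\det(g_{\la\lb})$ modulo Ricci-curvature commutator corrections, which are exactly of the type handled in Proposition~\ref{harm} and are absorbed by the Einstein identity; the result is $r\,h_{k\lbar}-f^*(R^{\det E}_G)_{k\lbar}$. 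For $\Box_{\omega_s}(a_k^\sigma a_{\lbar\sigma})$, Leibniz produces two types of terms: cross terms $-g^{\lbbar\la}(\partial_\la a_k^\sigma\,\partial_\lbbar a_{\lbar\sigma}+\partial_\la a_{\lbar\sigma}\,\partial_\lbbar a_k^\sigma)$ and pure second-derivative terms $a_{\lbar\sigma}\,\Box_{\omega_s}a_k^\sigma+a_k^\sigma\,\Box_{\omega_s}a_{\lbar\sigma}$. Using $\partial_\lbbar a_k^\sigma=A_{k\lbbar}^\sigma$ together with the K\"ahler identity $\partial_\la g_{\mu\bar\nu}=\partial_\mu g_{\la\bar\nu}$ and appropriate index relabellings, the cross terms reorganize into the pointwise Hermitian pairing $A_k\cdot A_\lbar$; the pure second-derivative terms reduce, by the divergence computation carried out inside the proof of Proposition~\ref{harm}, to $r\,a_k^\sigma a_{\lbar\sigma}$ (up to sign).

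Combining the two pieces, the $r\,h_{k\lbar}$ from $\Box_{\omega_s}h_{k\lbar}$ and the $r\,a_k^\sigma a_{\lbar\sigma}$ from $\Box_{\omega_s}(a_k^\sigma a_{\lbar\sigma})$ assemble into $r\,c(\varphi)_{k\lbar}$, and the remaining contributions are exactly $A_k\cdot A_\lbar$ and $-f^*(R^{\det E}_G)_{k\lbar}$, yielding (\ref{ell}) after moving $r\,c(\varphi)_{k\lbar}$ to the left.

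The calculation is computational rather than conceptual; the main obstacle is pure index bookkeeping, specifically checking that the cross Leibniz terms --- which a priori are not even of the same index type as the Kodaira--Spencer pairing, since one of them involves third derivatives of $\varphi$ of type $(1,2)$ --- genuinely reassemble into the precise pairing $A_k\cdot A_\lbar$ used in the To--Weng formula, and with the correct normalization so that (\ref{ell}) substitutes cleanly into that formula. A conceptual sanity check is that, combined with the vanishing $A_k\equiv 0$ established in Proposition~\ref{harm}, (\ref{ell}) reduces to the inhomogeneous elliptic equation $(\Box_{\omega_s}-r)c(\varphi)_{k\lbar}=-f^*(R^{\det E}_G)_{k\lbar}$ on the projective fibers, which is the form needed when we evaluate the To--Weng curvature integrals in the proof of Theorem~\ref{thm}.
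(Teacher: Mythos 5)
Your overall strategy coincides with the paper's: split $c(\varphi)_{k\lbar}=h_{k\lbar}-a_k^{\ls}a_{\lbar\ls}$, apply the fiberwise Laplacian to each summand, feed in the isometry hypothesis only through $\det(g)=h^{n+1}\cdot(f^*G)^{-1}$ and its two differentiated consequences (the fiberwise Einstein identity and the mixed second derivative of $\log\det(g)$), and use Proposition~\ref{harm} together with the Ricci commutator to evaluate the second-derivative terms. The final assembly you describe --- $r\,h_{k\lbar}$ and $r\,a_k^{\ls}a_{\lbar\ls}$ recombining into $r\,c(\varphi)_{k\lbar}$, with $A_k\cdot A_{\lbar}$ and $-f^*(R^{\det E}_G)_{k\lbar}$ left over --- is exactly what the paper obtains.

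However, one step of your accounting is wrong as stated, and followed literally the terms would not close up. The passage from $g^{\ld\lc}\dl_k\dl_{\lbar}g_{\lc\ld}$ to $\dl_k\dl_{\lbar}\log\det(g)$ is \emph{not} ``modulo Ricci-curvature commutator corrections absorbed by the Einstein identity'': the discrepancy is the genuine first-order term
$-\dl_k g^{\ld\lc}\,\dl_{\lbar}g_{\lc\ld}
= g^{\ld\la}g^{\lb\lc}\,\dl_k g_{\la\lb}\,\dl_{\lbar}g_{\lc\ld}
= g^{\ld\lc}a_{k;\lc}^{\ls}a_{\lbar\ls;\ld}$,
which survives. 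Hence $\Box_{\omega_s}h_{k\lbar}$ is not $r\,h_{k\lbar}-f^*(R^{\det E}_G)_{k\lbar}$; it carries this extra contribution. Correspondingly, of the two Leibniz cross terms in $\Box_{\omega_s}(a_k^{\ls}a_{\lbar\ls})$, only the $\dbar$-type one $g^{\ld\lc}A_{k\ld}^{\ls}A_{\lbar\ls\lc}$ becomes the Kodaira--Spencer pairing $A_k\cdot A_{\lbar}$; the $\dl$-type one $g^{\ld\lc}a_{k;\lc}^{\ls}a_{\lbar\ls;\ld}$ does not ``reorganize into $A_k\cdot A_{\lbar}$'' but instead cancels exactly against the leftover term above. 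Your sketch lumps both cross terms into the pairing, which would either double-count $A_k\cdot A_{\lbar}$ or leave the $\dl$-type term dangling. Once that cancellation is put in the right place the computation closes and is then identical to the paper's proof; your treatment of the pure second-derivative terms (one killed by harmonicity of $A_{\lbar}$, the other reduced to $-(n+1)a_k^{\ls}$ by commuting derivatives and using the Einstein condition) is correct.
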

\begin{proof}
It holds $-\Box(v_k \cdot v_{\lbar})=g^{\ld\lc}(h_{k\lbar} - a_k^{\ls}a_{\lbar\ls})_{;\lc\ld}.$ We compute
\begin{eqnarray*}
g^{\ld\lc}h_{k\lbar;\lc\ld} &=& g^{\ld\lc}\dl_k\dl_{\lbar}h_{\lc \ld}\\
&=& g^{\ld\lc}\dl_k\dl_{\lbar}g_{\lc \ld}\\
&=&\dl_k(g^{\ld\lc}\dl_{\lbar}g_{\lc\ld})-\dl_kg^{\ld \lc}\dl_{\lbar}g_{\lc\ld}\\
&=&\dl_k(g^{\ld\lc}\dl_{\lbar}g_{\lc\ld})+g^{\ld \la}g^{\lb\lc}\dl_kg_{\la \lb}\dl_{\lbar}g_{\lc\ld}\\
&=& \dl_k\dl_{\lbar} \log \det(g_{\lc\ld}) + a_k^{\lc;\ld}a_{\lbar\lc;\ld}\\
&=& \dl_k\dl_{\lbar}\log(h^{n+1}\cdot (f^*G)^{-1}) + a_{k;\lc}^{\ls}a_{\lbar\ls;\ld}g^{\ld\lc}\\
&=& -(n+1)h_{k\lbar} + f^*(R_G^{\det E})_{k\lbar} + a_{k;\lc}^{\ls}a_{\lbar\ls;\ld}g^{\ld\lc}
\end{eqnarray*}
and
$$
(a_k^{\ls}a_{\lbar \ls})_{\lc\ld}g^{\ld\lc} = (a_{k;\lc\ld}^{\ls}a_{\lbar\ls} + A_{k_{\ld}^{\ls}}A_{\lbar\ls\lc} + a_{k;\lc}^{\ls}a_{\lbar\ls;\ld} + a_k^{\ls}A_{\lbar\ls\lc;\ld})g^{\ld\lc}.
$$
The last term vanishes because of the harmonicity of $A_l$ and 
\begin{eqnarray*}
a_{k;\lc\ld}^{\ls}g^{\ld\lc} &=& A_{k\ld;\lc}^{\ls}g^{\ld\lc} + a_k^{\lam}R^{\ls}_{\lam\ld\lc}g^{\ld\lc}\\
&=& 0 - a_k^{\lam}R^{\ls}_{\lam}\\
&=& - (n+1)a_k^{\ls}.
\end{eqnarray*}
\end{proof}
We observe that by integrating the equation (\ref{ell}) against the volume form induced by $\omega_{s}$ we get 
\begin{corollary}
\begin{equation}
\label{detcur}
\frac{f^*(R^{\det E}_G)_{k\lbar}}{(r-1)!} = \int_{X_s}{r \,c(\varphi)_{k\lbar}\;\frac{\omega_s^n}{n!}} \quad \mbox{if} \quad \int_{X_s}{\omega_s^n} =1.
\end{equation}
\end{corollary}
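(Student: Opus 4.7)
The plan is to integrate the pointwise identity (\ref{ell}) of Proposition \ref{elleq} against the volume form $\omega_s^n/n!$ on the fiber $X_s$ and evaluate each of the three resulting terms separately. Recall that the fibers are projective spaces $\PR(E_s^*) \cong \PR^{r-1}$ of complex dimension $n = r - 1$, and we work under the normalization $\int_{X_s} \omega_s^n = 1$.

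First I would dispose of the Laplacian term on the left-hand side. Since $\Box_{\omega_s} c(\varphi)_{k\lbar}$ is (for each fixed pair of base indices $k,\lbar$) the Laplacian of a smooth function on the closed K\"ahler manifold $X_s$, Stokes' theorem gives $\int_{X_s} \Box_{\omega_s} c(\varphi)_{k\lbar}\, \omega_s^n/n! = 0$. Hence the integrated left-hand side reduces to $-r \int_{X_s} c(\varphi)_{k\lbar}\, \omega_s^n/n!$.

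Next I would handle the right-hand side. By Proposition \ref{harm}, the Kodaira-Spencer forms $A_k$ vanish identically under the hypothesis of Theorem \ref{thm}, so the contribution $\int_{X_s} A_k \cdot A_{\lbar}\, \omega_s^n/n!$ is zero. The remaining term $f^*(R^{\det E}_G)_{k\lbar}$ is pulled back from the base $S$ and therefore constant along the fiber $X_s$; it factors out of the fiber integral, and the volume normalization yields $\int_{X_s} f^*(R^{\det E}_G)_{k\lbar}\, \omega_s^n/n! = f^*(R^{\det E}_G)_{k\lbar}/n!$.

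Equating both sides gives $-r \int_{X_s} c(\varphi)_{k\lbar}\, \omega_s^n/n! = -f^*(R^{\det E}_G)_{k\lbar}/n!$, and since $n = r - 1$ we have $n! = (r-1)!$, which is exactly the claimed identity (\ref{detcur}). There is no real obstacle here: Propositions \ref{harm} and \ref{elleq} have already carried out the substantive work, and this corollary is just an application of the divergence theorem together with the pullback character of the last term in (\ref{ell}).
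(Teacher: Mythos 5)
Your proposal is correct and is exactly the argument the paper intends: the corollary is stated as the result of integrating equation (\ref{ell}) over the fiber, with the Laplacian term killed by Stokes, the $A_k\cdot A_{\lbar}$ term killed by Proposition \ref{harm}, and the pulled-back curvature term factored out using the volume normalization $\int_{X_s}\omega_s^n=1$ and $n!=(r-1)!$. No discrepancy with the paper's (essentially omitted) proof.
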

Now we can prove Theorem \ref{thm}: 
\begin{proof}

First we can make some reductions: By placing one-dimensional discs in all directions in the base $S$, we can restrict to the case of a one-dimensional base $S$. We fix a point $s \in S$ and consider the corresponding K\"ahler-Einstein form $\omega_s$. Because this is a Fubini-Study form, we can choose a local holomorphic frame $e_1,\ldots,e_r$ of $E^*$ around $s$ and the corresponding coordinates homogenous coordinates $W_1,\ldots, W_r$ on $\PR(E_s^*)$  such that this form becomes the standard Fubini-Study form on $\PR(E_s^*)\cong \PR^n$ which we denote by 
$\omega_{FS}$. At the same time we view the coordinates $W_1,\ldots,W_r$ as a base of global holomorphic sections of $f_*(\Oh_E(1)|_{X_s})\cong E_s$. Because the hermitian metric $h|_{X_s}$ and the standard hermitian metric on $\Oh_E(1)|_{X_s}$ given by 
$$
\frac{1}{|W^2|} := \frac{1}{|W_1|^2 + |W_2|^2 + \ldots + |W_r|^2}
$$
have the same curvature form on $X_s=\PR(E_s^*)$, hence they diver by a positive constant $\delta$ on the fixed fiber $X_s$.

Now we apply the formula of To and Weng to study the curvature of the $L^2$-metric given by $(\ref{L2})$ on $f_*\Oh_{E}(1)\cong E$. Using Proposition \ref{harm} the first term in the curvature formula completely disappears. Using the further reductions we made the second term reads as
$$
\Theta_{i\jbar s\sbar}(s) = \int_{X_s}{(c(\varphi) + \Box_{\omega_{FS}}c(\varphi)) \delta \frac{W_i W_{\jbar}}{|W|^2}\; \frac{\omega_{FS}^n}{n!}}.
$$
We invoke also Proposition \ref{elleq} to obtain
$$
\Theta_{i\jbar s\sbar}(s) = \int_{X_s}{((r+1)c(\varphi) - f^*R^{\det E}_G) \delta \frac{W_i W_{\jbar}}{|W|^2}\; \frac{\omega_{FS}^n}{n!}}.
$$
Now we remind ourselves that we are integrating over $\PR^n$ with respect to the standard Fubini-Study form. For such integrals we have the following (see \cite[Lemma 4.1]{LSY13})
\begin{lemma}
$$
\int_{\mathbb{P}^n}{\frac{W_{\la}W_{\lb}}{|W|^2}\frac{\omega_{FS}^n}{n!}}=\frac{\delta_{\la \lb}}{(n+1)!}
$$
$$
\int_{\mathbb{P}^n}{\frac{W_{\la}W_{\lb}W_{\lc}W_{\ld}}{|W|^4}\frac{\omega_{FS}^n}{n!}}=
\frac{\delta_{\la\lb}\delta_{\lc\ld} + \delta_{\la\ld}\delta_{\lc\lb}}{(n+2)!}
$$
\end{lemma}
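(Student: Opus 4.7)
The plan is to exploit the $U(r)$-invariance of the Fubini-Study measure and reduce both identities to a single volume normalization. The unitary group $U(r)$ acts on $\PR^{n}=\PR(E_s^*)$ in such a way that $\omega_{FS}$ and the measure $\omega_{FS}^n/n!$ are preserved, and the ratios $W_\la\ovl{W_\lb}/|W|^2$ transform as the natural $(1,1)$-representation (and its symmetric square in the second integral). Hence the left-hand sides of both identities must be tensors invariant under this action.

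First, for the quadratic integral, by Schur's lemma the only $U(r)$-invariant $(1,1)$-tensor is a multiple of $\delta_{\la\lb}$; so the integral equals $C_1\,\delta_{\la\lb}$ for some constant $C_1$. To pin down $C_1$, I would set $\la=\lb$ and sum from $1$ to $r=n+1$: the integrand collapses to $\sum_\la |W_\la|^2/|W|^2=1$, giving $rC_1=\int_{\PR^n}\omega_{FS}^n/n!$. With the standard normalization $\int_{\PR^n}\omega_{FS}^n=1$ (which is the Fubini-Study form in the class of $\Oh(1)$), this yields $C_1=1/(n+1)!$. For the quartic integral, the space of $U(r)$-invariant tensors on $(\C^r)^{\otimes 2}\otimes(\C^{r*})^{\otimes 2}$ is two-dimensional, spanned by $\delta_{\la\lb}\delta_{\lc\ld}$ and $\delta_{\la\ld}\delta_{\lc\lb}$; the symmetry of the integrand under $(\la,\lc)\leftrightarrow(\lc,\la)$ and under complex conjugation forces the integral to equal $C_2(\delta_{\la\lb}\delta_{\lc\ld}+\delta_{\la\ld}\delta_{\lc\lb})$. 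To fix $C_2$, perform the two independent contractions $\la=\lb$, $\lc=\ld$ and sum, which again collapses the integrand to $1$ and yields $(r^2+r)C_2=1/n!$, i.e.\ $C_2=1/(n+2)!$.

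The only genuine work is the volume computation $\int_{\PR^n}\omega_{FS}^n/n!=1/n!$, which is the standard normalization matching $c_1(\Oh(1))$; it can be verified in the affine chart $W_r=1$, where $\omega_{FS}=\sqrt{-1}\,\dl\dbar\log(1+|z|^2)$ and a Beta-function computation gives the stated value. No part of this argument is expected to pose a real obstacle; once the invariance reasoning is set up, the identities are determined by counting invariants and matching a single normalization constant.
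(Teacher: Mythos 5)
Your argument is correct, and it takes a genuinely different route from the paper, which in fact offers no proof of this lemma at all: it is quoted from Lemma 4.1 of \cite{LSY13}, where the identities are obtained by direct computation (essentially by reducing to moment integrals over the unit sphere in $\C^{n+1}$ and evaluating them explicitly). Your route --- observe that both left-hand sides are $U(n+1)$-invariant tensors, use the first fundamental theorem of invariant theory to see that the space of such tensors is one-, respectively two-, dimensional, use the manifest symmetry in $(\la,\lc)$ to equate the two coefficients in the quartic case, and then fix the remaining constants by contracting indices --- is cleaner, avoids all explicit integration except for the total volume, and scales immediately to moments of any order (the $2k$-th moment is $\frac{1}{(n+k)!}$ times the sum over permutations in $S_k$ of the corresponding products of Kronecker deltas). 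The one point to tighten is the normalization at the very end: the lemma forces $\int_{\PR^n}\omega_{FS}^n=1$ (trace the first identity over $\la=\lb$), which is the form $\frac{\sqrt{-1}}{2\pi}\dl\dbar\log|W|^2$ representing $c_1(\Oh(1))$; the form $\sqrt{-1}\,\dl\dbar\log(1+|z|^2)$ that you name in the affine chart has total mass $(2\pi)^n$, so your Beta-function check would come out off by exactly that factor. This is a bookkeeping slip rather than a gap --- and the paper is loose on the same point, compare the hypothesis $\int_{X_s}\omega_s^n=1$ in its Corollary --- but since your entire proof hinges on a single normalization constant, the convention should be stated precisely.
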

Thus, using also the identity (\ref{detcur}) from the corollary above, we can write
\begin{eqnarray*}
&&\int_{X_s}{f^*R^{\det E}_G \delta \frac{W_i W_{\jbar}}{|W|^2}\; \frac{\omega_{FS}^n}{n!}}\\
&=& f^*R^{\det E}_G \delta \int_{X_s}{\frac{W_i W_{\jbar}}{|W|^2}\; \frac{\omega_{FS}^n}{n!}}\\
&=& \frac{f^*R^{\det E}_G}{r!} \delta \delta_{i\jbar}\\
&=& \int_{X_s}{\delta \, \delta_{i\jbar} \, c(\varphi) \;\frac{\omega_s^n}{n!}}
\end{eqnarray*}
Hence we can rewrite the expression for the curvature as
\begin{equation}
\label{break}
\Theta_{i\jbar s\sbar}(s) = \delta \int_{X_s}{((r+1)c(\varphi) \, \frac{W_i W_{\jbar}}{|W|^2} - \delta_{i\jbar} \, c(\varphi))\; \frac{\omega_{FS}^n}{n!}}.
\end{equation}
From this expression alone it it not yet possible to read off the Griffiths positivity of the direct image metric. We need a further ingredient that explains the structure of the geodesic curvature function $c(\varphi)$ which allows us to evaluate the integral. For this we go back to the elliptic equation (\ref{ell}). We can rewrite it as
$$
(\Box_{\omega_{FS}}-r)(rc(\varphi) - f^*R^{\det E}_G)=0.
$$  
 This means that the function $rc(\varphi) - f^*R^{\det E}_G$ is a eigenfunction to the smallest positive eigenvalue $r=n+1$ on the K\"ahler manifold $(\PR^n,\omega_{FS})$. But this functions are known. A base of eigenfunctions is given by
$$
\left( r\frac{W_{\beta} W_{\labar}}{|W^2|} - \delta_{\labar \beta} \right)_{\alpha,\beta=1}^r
$$
Hence we can write
\begin{equation}
\label{id}
rc(\varphi) - \int_{X_s}{r \, c(\varphi)\; \omega^n_{FS}} 
= r \sum_{\alpha,\beta=1}^r{\lam_{\labar \beta}\frac{W_{\beta} W_{\labar}}{|W^2|}} 
- \sum_{\alpha, \beta=1}^r \delta_{\labar \beta}\lam_{\labar \beta}
\end{equation}
for some constants $\lam_{\labar\beta} \in \C$. Now we would like to identify the function
$$
\tilde{c}(\varphi) = \sum_{\alpha,\beta=1}^r{\lam_{\labar \beta}\frac{W_{\beta} W_{\labar}}{|W^2|}}
$$
with $c(\varphi)$. We already know from the equation (\ref{id}) that they differ only by a constant. 
But if we choose 
$$
\lam_{\labar \alpha} = \Theta_{\alpha \labar}s\sbar(s)
$$
for $\alpha=1,\ldots, r$, we
have that
$$
\int_{X_s}{\tilde{c}(\varphi)\, \frac{\omega_{FS}^n}{n!}} 
= \tr \Theta_{\alpha \beta s \sbar} 
= \int_{X_s}{c(\varphi) \, \frac{\omega_{FS}^n}{n!}}.
$$
This shows that we can arrange the constants $\lam_{\labar\beta}$ in such a way such that on the fiber $X_s$ we indeed have
$$
c(\varphi) = \sum_{\alpha,\beta=1}^r{\lam_{\labar \beta}\frac{W_{\beta} W_{\labar}}{|W^2|}}.
$$
Plugging this into the equation (\ref{break}), we can proceed as
\begin{eqnarray*}
\Theta_{i\jbar s\sbar}(s) 
&=& \delta \int_{X_s}{((r+1)c(\varphi) \, \frac{W_i W_{\jbar}}{|W|^2} - \delta_{i\jbar} \, c(\varphi))\; \frac{\omega_{FS}^n}{n!}}\\
&=& \delta \int_{X_s}{((r+1) \lam_{\labar \beta}\frac{W_{\beta} W_{\labar}}{|W^2|} \, \frac{W_i W_{\jbar}}{|W|^2} - \delta_{i\jbar} \, \lam_{\labar \beta}\frac{W_{\beta} W_{\labar}}{|W^2|})\; \frac{\omega_{FS}^n}{n!}}\\
&=& \delta (r+1) \lam_{\labar \beta} \int_{X_s}{\frac{W_{\beta} W_{\labar} W_i  W_{\jbar}}{|W|^4}\; \frac{\omega_{FS}^n}{n!}} -
\delta \delta_{i\jbar} \, \lam_{\labar \beta} \int_{X_s}{\frac{W_{\beta} W_{\labar}}{|W^2|})\; \frac{\omega_{FS}^n}{n!}}\\
&=& \frac{\delta \lam_{\labar \beta}}{r!}\left( (\delta_{\beta \ovl{\la}}\delta_{i \jbar} + \delta_{\beta \jbar}\delta_{i \ovl{\la}}) - \delta_{\beta \labar}\delta_{i \jbar} \right)\\
&=& \lam_{i\jbar} \frac{\delta}{r!}.
\end{eqnarray*}
Now because the expression
$$
\sum_{i,j=1}^r{\Theta_{i\jbar s \sbar} \frac{W_i W_{\jbar}}{|W|^2}} = c(\varphi)
$$
is positive (because $(\Oh_E(1),h)$ was positive by assumption), we conclude that the $L^2$-metric on $f_*\Oh_E(1)\cong E$ is indeed Griffiths positive. This proves Theorem \ref{thm}.
\end{proof}

\newpage

\section{The relative K\"ahler-Ricci Flow}
\subsection{The flow on $K_{\X/S}^{-1}$}
First we recall the relative K\"ahler-Ricci flow for the bundle $K_{\X/S}^{-1}$ and a family of Fano-K\"ahler-Einstein manifolds $X_s$ as introduced in \cite{Be13}. As proved in \cite{Be13} this flow converges in $C^{\infty}$ to a K\"ahler-Einstein weight $\phi_{\infty}$ on any Fano-K\"ahler-Einstein manifold $X$ with $H^0(X,T_X)=0$. But using the recent developments concerning the K\"ahler-Ricci flow on Fano-K\"ahler-Einstein manifolds, we show that the condition about the automorphism group can be removed.

For this, we consider the absolute case of a Fano-K\"ahler-Einstein manifold X ($\dim X=n$) with a metric $h=e^{-\phi}$ on $K_X^{-1}$ of positive curvature. We write $\phi \in \mathcal{H}_X$, where $\mathcal{H}_X$ is the set of smooth and positive (weights for) metrics on $K_X^{-1}$. In this notation $h=e^{-\phi}$ is the pointwise norm squared of a local trivializing section 
$$
\dl/\dl z^1 \wedge \ldots \wedge \dl/\dl z^n
$$ 
of $K_X^{-1}$. But more globally, we can view $e^{-\phi}$ as a volume form on $X$ by regarding
$$
c_n dz^1\wedge \ldots \wedge dz^n\wedge dz^{\ovl{1}} \wedge \ldots \wedge dz^{\ovl{n}} e^{-\phi}.
$$
Therefore, we can define the \emph{canonical measure}
$$
\mu_{\phi}:= \frac{e^{-\phi}}{\int_X{e^{-\phi}}}.
$$
We have also the \emph{Monge-Amp\`{e}re measure} defined as
$$
\MA(\phi) := V^{-1}(dd^c \phi)^n,
$$
where $V=\int_X{(dd^c\phi)^n}$. These two measures only depend on the curvature form $\omega = dd^c \phi$ of $\phi$ and thus we write $\mu_{\phi}=\mu_{\omega}$. The K\"ahler-Einstein condition becomes
$$
V^{-1}\omega^n=\mu_{\omega}.
$$
The smooth function
\begin{equation}
\label{RicciPot}
u=u_{\omega}:= \log\left( \frac{V^{-1}\omega^n}{\mu_{\omega}}\right) = \left(\frac{\MA(\phi)}{\mu_{\phi}}\right)
\end{equation}
satisfies $dd^c u = \omega - \Ric(\omega)$ and $V^{-1}\int_X{e^{-u}\omega^n}=1$ and hence coincides with the normalized Ricci potential.

We consider the (normalized) K\"ahler-Ricci flow
\begin{equation}
\label{flowcur}
\dot{\omega}_t = \omega_t - \Ric(\omega_t)
\end{equation}
on the level of K\"ahler forms starting with some initial K\"ahler form $\omega_0 \in c_1(X)$ which is the curvature form of a positively curved hermitian metric $h_0=e^{-\phi_0}$ on $K_X^{-1}$.
This flow has a global solution and Perelman's estimates show that the normalized Ricci potential
 $$
 u_t := \log\left( \frac{V^{-1} \omega_t^n}{\mu_{\omega_t}}\right)
 $$
is uniformly bounded in $C^0$. This means that $\omega_t^n$ and $\mu_{\omega_t}$ remain uniformly comparable along the flow. The main result of \cite{PSSW09} is the following:

\begin{theorem*}
If there exists constants $C,\epsilon >0$ such that the Mabuchi K-energy $M(\omega_t)$ and the first eigenvalue positive eigenvalue $\lambda_1(\omega_t)$ of the $\dbar$-Laplacian with respect to $\omega_t$ on the space of smooth vector fields satisfy $M(\omega_t)\geq -C$ and $\lambda_1(\omega_t)\geq \epsilon$ along the flow, then $\omega_t$ converges at exponential rate in $C^{\infty}$ topology to a K\"ahler-Einstein metric. 
\end{theorem*}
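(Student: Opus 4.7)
The plan is to combine three ingredients: Perelman's a priori estimates along the K\"ahler-Ricci flow, the monotonicity of the Mabuchi K-energy, and the spectral gap furnished by the hypothesis $\lambda_1(\omega_t)\geq \epsilon$. First I would invoke Perelman's results, which yield uniform $C^0$ bounds along the normalized K\"ahler-Ricci flow for the Ricci potential $u_t$ defined in (\ref{RicciPot}), for the scalar curvature, and for the diameter of $(X,\omega_t)$. These are the unconditional regularity inputs that make every subsequent estimate uniform in $t$, and in particular guarantee that $\omega_t^n$ and $\mu_{\omega_t}$ remain comparable.

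Next I would exploit the standard identity
\begin{equation*}
\frac{d}{dt}M(\omega_t)=-\int_X |\nabla u_t|^2_{\omega_t}\,\omega_t^n
\end{equation*}
together with the assumed lower bound $M(\omega_t)\geq -C$ to conclude
\begin{equation*}
\int_0^{\infty}\!\int_X |\nabla u_t|^2_{\omega_t}\,\omega_t^n\,dt<\infty.
\end{equation*}
At this point the hypothesis $\lambda_1(\omega_t)\geq \epsilon$ plays the role of a uniform Poincar\'e inequality on functions: after normalizing $u_t$ so that $\int_X u_t\,\omega_t^n=0$, one obtains $\|u_t\|_{L^2(\omega_t)}^2\leq \epsilon^{-1}\|\nabla u_t\|_{L^2(\omega_t)}^2$, so $\|u_t\|_{L^2}\to 0$.

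To upgrade $L^2$-decay to smooth decay I would combine the evolution equation $\dot u_t=\Delta_t u_t+u_t$ (up to a time-dependent constant absorbed by the normalization) with Perelman's bounds and standard parabolic regularity, boosting the control via Calabi's third-order identity to get uniform $C^k$ bounds on $u_t$ for every $k$. Interpolation then converts $L^2$-decay into $C^{\infty}$-decay, and since $dd^c u_t=\omega_t-\Ric(\omega_t)$, this is exactly convergence of $\omega_t$ to a K\"ahler-Einstein metric. For the exponential rate I would linearize at the limit $\omega_\infty$: the linearization on functions is governed by $-(\Delta_\infty+1)$, and Matsushima's theorem identifies its kernel with real parts of holomorphic potentials. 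The hypothesis $\lambda_1(\omega_t)\geq \epsilon$ persists in the limit and forces a genuine spectral gap on the orthogonal complement of this kernel, so a Gronwall argument on the linearized flow yields $\|u_t\|_{C^k}\leq C_k e^{-\epsilon' t}$ for some $\epsilon'>0$.

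The hard part is the passage from weak $L^2$-convergence of the Ricci potential to $C^\infty$-convergence of the metric. Propagating uniform Sobolev and injectivity-radius constants over $t\in[0,\infty)$ in order to iterate the parabolic bootstrap depends in a delicate way on Perelman's estimates and on the comparability of $\omega_t^n$ with $\mu_{\omega_t}$ along the flow. This uniform regularity step, together with the careful identification of the spectral gap that survives at the limit, is the technical heart of \cite{PSSW09}.
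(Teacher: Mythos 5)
First, note that the paper does not actually prove this statement: it is quoted as the main theorem of \cite{PSSW09} and used as a black box (only the subsequent Corollary, which combines it with the Tian--Zhu convergence result, is given a proof). So there is no internal proof to compare against; the comparison below is with the actual argument of Phong--Song--Sturm--Weinkove.

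Your skeleton --- Perelman's estimates, the monotonicity $\frac{d}{dt}M(\omega_t)=-\int_X|\nabla u_t|^2_{\omega_t}\,\omega_t^n$ giving $\int_0^\infty Y(t)\,dt<\infty$ for $Y(t)=\int_X|\nabla u_t|^2_{\omega_t}\,\omega_t^n$, and a parabolic bootstrap --- is the right one, but the step where you actually invoke the hypothesis is wrong. The quantity $\lambda_1(\omega_t)$ is the first positive eigenvalue of the $\dbar$-Laplacian acting on smooth $T^{1,0}$ \emph{vector fields}, not of the Laplacian on functions, so it does not furnish the Poincar\'e inequality $\|u_t\|^2_{L^2}\le\epsilon^{-1}\|\nabla u_t\|^2_{L^2}$ you use. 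The decisive mechanism of \cite{PSSW09} is to apply the eigenvalue bound to the gradient vector field $V_t=\nabla^{1,0}u_t$: the tensor $\dbar V_t$ measures the failure of $V_t$ to be holomorphic, a Bochner-type identity expresses $\frac{d}{dt}Y(t)$ in terms of $\|\dbar V_t\|^2$ plus terms controlled by Perelman's estimates, and the bound $\|V_t\|^2\le\lambda_1^{-1}\|\dbar V_t\|^2$ (on the orthogonal complement of the holomorphic vector fields) turns this into a differential inequality of the form $\dot Y\le-\delta Y+\ldots$, which yields \emph{exponential} decay of $Y$ directly; the $C^\infty$ exponential convergence then follows by bootstrapping. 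Your route gives at best $Y(t)\to0$ (and even that needs the uniform bound on $\dot Y$ coming from Perelman's estimates, which you do not mention), and your proposed source of the exponential rate --- linearizing at the limit and invoking Matsushima --- presupposes the convergence you are trying to establish and runs into exactly the degeneracy (nontrivial kernel from holomorphic vector fields when $\Aut(X)$ is non-discrete) that the Corollary in the paper is designed to circumvent. So the key idea of the cited theorem, the use of the vector-field eigenvalue to close a differential inequality for $Y(t)$, is missing from your sketch.
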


As observed in \cite{CS12}, this combines with the main result of \cite{TZ11} to give
\begin{corollary*}
\label{Cor}
If $X$ is Fano and K\"ahler-Einstein, then the flow of K\"ahler forms $\omega_t$ according to (\ref{flowcur}) converges at exponential rate to a K\"ahler-Einstein metric. 
\end{corollary*}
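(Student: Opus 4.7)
The plan is to reduce the claim to a direct application of the PSSW09 theorem recalled above, by verifying both of its hypotheses along the flow $\omega_t$.

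First step, the Mabuchi K-energy: since $X$ is assumed to admit a K\"ahler-Einstein metric in $c_1(X)$, the Bando-Mabuchi theorem implies that $M$ is bounded below (and even proper modulo $\Aut^0(X)$) on the space of K\"ahler potentials in $c_1(X)$. In particular $M(\omega_t)\geq -C$ uniformly in $t$, and the first hypothesis of PSSW09 is granted for free, independently of the flow equation.

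Second step, the uniform lower bound $\lambda_1(\omega_t)\geq \epsilon$ on the first positive eigenvalue of the $\dbar$-Laplacian acting on smooth vector fields: this is precisely the content of the main theorem of \cite{TZ11}, established for the normalized K\"ahler-Ricci flow starting from any positively curved initial metric on a Fano K\"ahler-Einstein manifold. The argument there combines Perelman's uniform estimates on the normalized Ricci potential $u_t$ and on the scalar curvature with a Bochner/Weitzenb\"ock-type identity on vector fields to prevent $\lambda_1(\omega_t)$ from collapsing to zero. With both hypotheses in place, the PSSW09 theorem produces exponential $C^{\infty}$-convergence of $\omega_t$ to a K\"ahler-Einstein form, which is the stated conclusion.

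The main obstacle in this argument is clearly the eigenvalue bound furnished by \cite{TZ11}: when $\Aut^0(X)$ is non-trivial the kernel of the $\dbar$-Laplacian on $T^{1,0}X$ is non-zero (its holomorphic vector fields), and one must rule out eigenvalues drifting down into an ``almost-kernel'' along the flow. Handling this issue is exactly what allows the removal of Berman's hypothesis $H^0(X,T_X)=0$ from the earlier convergence result in \cite{Be13}; all the other inputs (Perelman estimates, Bando-Mabuchi, and the PSSW09 criterion) are by now standard and require no modification in our setting.
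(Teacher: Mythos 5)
Your overall strategy coincides with the paper's: verify the two hypotheses of the PSSW09 criterion and conclude. The first hypothesis is handled identically (the K-energy is bounded below because it attains its minimum at a K\"ahler-Einstein metric), so that part is fine.

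The gap is in your second step. The uniform bound $\lambda_1(\omega_t)\geq \epsilon$ is \emph{not} the content of the main theorem of \cite{TZ11}; that theorem states that the normalized flow on a Fano K\"ahler-Einstein manifold converges to a K\"ahler-Einstein metric \emph{modulo the action of} $\Aut^0(X)$, i.e.\ there is a path $g_t\in\Aut^0(X)$ with $g_t^*\omega_t\to\omega_{KE}$. No Bochner-type eigenvalue estimate along the flow is proved there, and your closing paragraph concedes that preventing $\lambda_1(\omega_t)$ from drifting into the ``almost-kernel'' is the main obstacle --- but your argument never actually overcomes it; it is simply outsourced to a theorem that does not contain it. The missing link, which is the observation of \cite{CS12} that the paper uses, is that $\lambda_1$ of the $\dbar$-Laplacian on vector fields is a biholomorphism invariant, so $\lambda_1(\omega_t)=\lambda_1(g_t^*\omega_t)\to\lambda_1(\omega_{KE})>0$, which gives the required uniform lower bound. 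Once you insert this one-line bridge between the convergence-modulo-automorphisms statement of \cite{TZ11} and the eigenvalue hypothesis of PSSW09, your proof agrees with the paper's.
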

\begin{proof}
Since the Mabuchi K-energy is minimized at a K\"ahler-Einstein metric, it is bounded from below. By the result of Tian and Zhu, there exists a path $g_t \in \Aut^0(X)$ such that $g_t^*\omega_t$ converges to a K\"ahler-Einstein metric. In particular, $\lambda_1(\omega_t)=\lambda_1(g_t^*\omega_t)$ is bounded from $0$ and we conclude by the previous result.
\end{proof}

Now we lift the flow (\ref{flowcur}) to level of hermitian metrics on $K_X^{-1}$. As introduced in \cite{Be13}, we consider the \emph{normalized flow} on the level of hermitian metric on $K_X^{-1}$ as follows:
\begin{equation}
\label{flowKx}
\dot{\phi}_t = \left( \frac{\MA(\phi_t)}{\mu_{\phi_t}}\right) = \log\left( \frac{V^{-1}(dd^c \phi_t)^n}{e^{-\phi_t}/\int_X{e^{-\phi_t}}}\right), \quad \phi_0=\phi.
\end{equation}
As already observed in \cite{Be13}, the fact that the two measures $\MA(\phi_t)$ and $\mu_{\phi_t}$ are both probability measures gives that the time derivative $\dot{\phi}_t$ coincides with the normalized Ricci potential $u_t$ defined by (\ref{RicciPot}). But now since $\omega_t$ converges exponentially fast to a K\"ahler-Einstein metric $\omega_{\infty}$,
$\dot{\phi}_t=u_t$ converges exponentially fast to $u_{\infty}=0$. But since the operator
$$
\omega \mapsto u_{\omega}=\left( \frac{V^{-1}\omega^n}{\mu_{\omega}}\right)
$$
is Lipschitz on bounded sets with respect to the $C^{\infty}$ topology, the exponential convergence can be integratet to imply that $t \mapsto \phi_t$ is Cauchy as $t \to \infty$. Hence $\phi_t$ converges to a hermitian metric $\phi_{\infty}$ which gives a K\"ahler-Einstein potential for $\omega_{\infty}$.

For the relative case of a family $f: \X \to S$ of Fano-K\"ahler-Einstein manifolds, we start with a metric $\phi$ on the anti-relative canonical bundle $K_{\X/S}^{-1}$. We define the flow fiberwise as in (\ref{flowKx}) and obtain convergence to a metric $\phi_{\infty}$ on $K_{\X/S}^{-1}$.

\subsection{The flow on $\Oh_E(r)$ - Definition and convergence}
Now we would like to run the flow on the bundle $\Oh_E(r)$ instead of $K_{\X/S}^{-1}$. Therefore, we first recall that the isomorphism
\begin{equation}
\label{iso1}
K_{\X/S}^{-1}\cong \Oh_E(r) \otimes f^*\det(E)^{-1}
\end{equation}
is canonical, because it comes from the natural relative Euler sequence
$$
0 \to \Omega_{\X/S} \to f^*E \otimes \Oh_E(-1) \to \Oh \to 0.
$$
Furthermore, the line bundle 
\begin{equation}
\label{iso2}
f_*(K_{\X/S}\otimes \Oh_E(r)) \cong \det E
\end{equation}
is fiberwise trivial and hence we have that
$$
H^0(X_s,K_{X_s}\otimes \Oh_{\PR(E_s^*)}(r)) \cong \det E_s
$$
is one-dimensional. 
Now we start with a hermitian metric on $\Oh(r)$ with fiberwise positive curvature. 
A local holomorphic (trivializing) section u of $\det E$ can be represented under the isomorphism (\ref{iso2}) by a holomorphic section of $K_{\X/S}\otimes \Oh(r)$ on the total space (after shrinking S), which are fiberwise holomorphic $n$-forms with values in $\Oh(r)$  (see also \cite{Sch12}). On the other hand, the pullback section $f^*u$ can locally on $\X$ be written as
$$
u = dz \otimes s
$$
with local trivializing sections $dz$ of $K_{\X/S}$ and $s$ of $\Oh_E(r)$ using the isomorphism
$$
f^*(\det E) \cong K_{\X/S} \otimes \Oh_E(r).
$$    
Of course, both descriptions are compatible with each other. This means that $dz$ and $s$ are only local, but $dz\otimes s$ is the local description of the global form $u$. Now let $e^{-\varphi}$ be the pointwise norm squared of $s$. The associated norm of 
$u_s \in H^0(X_s,K_{X_s}\otimes \Oh_{\PR(E^*)}(r))\cong \det E$ is given by the $L^2$-norm
$$
||u_s||^2 := \int_{X_s}{|u_s|^2e^{-\varphi}} := \int_{X_s}{c_n |s|^2e^{-\varphi}dz\wedge d\ovl{z}}.
$$
Now we write $e^{-\psi}:= ||u_s||^2$, which is a metric on $f^*{\det E}$. Hence 
$$
e^{-\phi} := e^{-\varphi}e^{\psi}
$$    
is a metric on $-K_{\X/S}$ corresponding to the trivializing section $(dz)^{-1}$. Now fiberwise, this is a volume form on $X_s$, in other words
$$
\mu_{\varphi}:=e^{-\varphi}/\int_{X_s}{|u_s|^2e^{-\varphi}}
$$
is a volume form on each fiber $X_s$, which does not depend on the choice of the local trivializing section $u$. Moreover, we have
\begin{proposition}
$\mu_{\varphi}$ is a probability measure.
\end{proposition}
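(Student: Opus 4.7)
The plan is essentially to unwind the definition and verify the three defining properties of a probability measure; the substance of the proposition is that the formal expression $\mu_\varphi$ is coordinate-independent and correctly normalized. First I would make precise what the numerator $e^{-\varphi}$ is meant to denote as a volume form on the fiber. Using the canonical isomorphism $f^*\det E\cong K_{\X/S}\otimes \Oh_E(r)$ displayed just above the proposition, the pullback section $f^*u$ of a local trivializing section $u$ of $\det E$ decomposes locally on $\X$ as $f^*u = dz\otimes s$, where $dz$ is a local trivializing $n$-form of $K_{\X/S}$ and $s$ is a local trivializing section of $\Oh_E(r)$ with $e^{-\varphi}=|s|^2$. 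With this convention the expression
\[
|u_s|^2\, e^{-\varphi} := c_n |s|^2\, e^{-\varphi}\, dz\wedge d\bar z
\]
is the restriction to $X_s$ of a globally defined $(n,n)$-form, even though each of the factors $dz$, $s$, and $e^{-\varphi}$ is only a local object. This is what should be read in the numerator of $\mu_\varphi$.

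Next I would check two points. \emph{Positivity:} the factor $e^{-\varphi}$ is strictly positive as the pointwise norm squared of a trivializing section, and $u_s$ is a nowhere-vanishing holomorphic section of $K_{X_s}\otimes \Oh_E(r)|_{X_s}$, since this line bundle on $X_s\cong \PR^n$ is trivial by the fiberwise form of the isomorphism (\ref{iso2}) — so $|u_s|^2 e^{-\varphi}$ defines a strictly positive smooth measure on $X_s$. \emph{Independence of $u$:} any other local trivialization of $\det E$ has the form $u' = \lambda u$ for a nowhere-zero holomorphic function $\lambda$ on $S$; the fiberwise rescaling $|u'_s|^2 = |\lambda(s)|^2|u_s|^2$ multiplies numerator and denominator of $\mu_\varphi$ by the same positive constant $|\lambda(s)|^2$, leaving the quotient unchanged.

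Finally, the total-mass identity is immediate from the very form of the normalization: integrating the numerator over $X_s$ recovers exactly the denominator, so $\int_{X_s} \mu_\varphi = 1$. There is no genuine obstacle; the content of the statement is that the normalization built into the definition of the $L^2$-norm $\|u_s\|^2$ is precisely what is needed to turn the pointwise weight $e^{-\varphi}$ into a bona fide probability measure on each fiber. This is the analogue in the present setting of the elementary fact used in Section 3.1 that $\mu_\phi = e^{-\phi}/\int_X e^{-\phi}$ is a probability measure on a Fano manifold, and it is exactly what will be required so that the right-hand side of the flow equation to be introduced on $\Oh_E(r)$ (the logarithmic ratio $\log(\MA(\varphi)/\mu_\varphi)$) has the same normalization properties that make $\dot\phi_t = u_t$ in the absolute case.
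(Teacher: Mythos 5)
Your proof is correct and takes essentially the same route as the paper: the total mass is $1$ because integrating the numerator (read as the global $(n,n)$-form $c_n|s|^2e^{-\varphi}\,dz\wedge d\ovl{z}$) reproduces exactly the denominator $\int_{X_s}|u_s|^2e^{-\varphi}$. Your additional checks of positivity and independence of the trivializing section $u$ are points the paper asserts in the text just before the proposition rather than inside the proof, but they do not change the argument.
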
  
\begin{proof}
$$
\int_{X_s}{[e^{-\varphi}/\int_{X_s}{e^{-\varphi}|u_s|^2}]dz\wedge d\ovl{z}} 
= \frac{\int_{X_s}{e^{-\varphi}|s^2|dz\wedge d\ovl{z}}}{\int_{X_s}{|s|^2e^{-\varphi}dz \wedge d\ovl{z}}}=1.
$$
\end{proof}
Thus we define the flow of hermitian metrics $\varphi_t$ on $\Oh_E(r)$, which are all positive along the fibers $X_s$ by
\begin{equation}
\label{flow}
\dot{\varphi}_t = \log\left( \frac{\MA(\varphi_t)}{\mu_{\varphi}}\right), \quad \varphi_0=\varphi.
\end{equation}

\begin{theorem}
The relative K\"ahler-Ricci flow on $\Oh_E(r)$ as defined in (\ref{flow}) is convergent.
\end{theorem}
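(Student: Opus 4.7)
The plan is to reduce convergence of the flow (\ref{flow}) on $\Oh_E(r)$ to the convergence of the Berman flow (\ref{flowKx}) on $K_{\X/S}^{-1}$ asserted at the end of the preceding subsection. Write $\phi_t := \varphi_t - \psi_t$, where $\psi_t$ is the $L^2$-induced metric on $f^*\det E$ characterized by $e^{-\psi_t(s)} = \int_{X_s}|u_s|^2 e^{-\varphi_t}$. Since $\psi_t$ is pulled back from $S$, we have $(dd^c\psi_t)|_{X_s}=0$ and hence $(dd^c\varphi_t)|_{X_s}=(dd^c\phi_t)|_{X_s}$; moreover the probability measure $\mu_{\varphi_t}$ coincides, by construction, with Berman's canonical measure $\mu_{\phi_t}$ on each fiber. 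So the flow (\ref{flow}) restricted to a fiber $X_s$ reads
$$
\dot{\varphi}_t\big|_{X_s} = \log\!\left(\frac{\MA(\phi_t|_{X_s})}{\mu_{\phi_t|_{X_s}}}\right),
$$
which is Berman's normalized K\"ahler--Ricci flow on $K_{X_s}^{-1}\cong \Oh_E(r)|_{X_s}$ (after a local trivialization of $f^*\det E$ near $s$).

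First I would apply $dd^c$ to this fiberwise equation to deduce that the curvature form $\omega_t^{(s)} := dd^c\varphi_t|_{X_s}$ evolves by the normalized K\"ahler--Ricci flow (\ref{flowcur}) on the Fano K\"ahler--Einstein manifold $X_s \cong \PR^n$. By Corollary~\ref{Cor} (which, combining Phong--Song--Sturm--Weinkove with Tian--Zhu, covers the non-discrete case $\Aut^0(\PR^n)=\mathrm{PGL}(n+1)$), the form $\omega_t^{(s)}$ converges exponentially in $C^\infty$ to a Fubini--Study metric, and therefore the Ricci potential $u_t^{(s)}=\dot{\varphi}_t|_{X_s}$ decays exponentially to $0$.

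Next I would integrate in time: once $\dot{\varphi}_t$ decays exponentially in $C^\infty$, \emph{uniformly in $s$} by compactness of $S$ and the smooth dependence of the initial datum $\varphi_0$ on the base, the expression
$$
\varphi_t = \varphi_0 + \int_0^t \dot{\varphi}_\tau\, d\tau
$$
is Cauchy as $t\to\infty$ in $C^\infty(\X)$ and hence converges to a smooth metric $\varphi_\infty$ on $\Oh_E(r)$ of positive fiberwise curvature. Equivalently, the convergence $\phi_t\to\phi_\infty$ obtained from the preceding subsection combines with the smooth dependence of the $L^2$-integral defining $\psi_t$ on $\varphi_t$ to yield $\psi_t\to\psi_\infty$ on $S$, so that $\varphi_t=\phi_t+\psi_t$ converges.

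The principal obstacle is establishing the required uniformity in $s\in S$ of the quantitative estimates entering Corollary~\ref{Cor}: Perelman's $C^0$ bound on the Ricci potential, the lower bound on the Mabuchi K-energy, and the first-eigenvalue lower bound for the $\dbar$-Laplacian. Compactness of $S$ together with the smoothness of the family $f:\X\to S$ should suffice to extract constants independent of both $s$ and $t$; the technical heart of the argument is verifying that the Tian--Zhu automorphism reduction can be carried out coherently across the family of Fubini--Study targets so that the exponential convergence rate does not degenerate as $s$ varies.
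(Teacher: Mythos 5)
Your argument is essentially the paper's own proof: reduce to the fiber $X_s$, use that $\Oh_E(r)|_{X_s}$ differs from $K_{X_s}^{-1}$ by a trivial line bundle so that $\dot{\varphi}_t$ is the normalized Ricci potential, invoke the corollary of Phong--Song--Sturm--Weinkove and Tian--Zhu to get exponential convergence on the Fano K\"ahler--Einstein fiber $\PR^n$, and integrate in time. You are in fact more explicit than the paper about the decomposition $\varphi_t=\phi_t+\psi_t$ and about the uniformity in $s\in S$ of the estimates, which the paper leaves implicit.
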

\begin{proof}
We need to show that the flow converges on each fiber $X_s$. For this we note that $K_{\X/S}^{-1}|_{X_s}=K_{X_s}^{-1}$ is canonical isomorphic to $\Oh_{\PR(E_s^*)}(r)\otimes f^*(\det E_{s})^{-1}$, hence $K_{X_s}^{-1}$ and $\Oh_{\PR(E_s^*)}(r)$ coincide up to a trivial line bundle on $X_s$. Thus we argue as before: By the previous proposition the measure $\mu_{\varphi}$ is a normalized volume form on $X_s$. Hence the time derivative $\dot{\varphi_t}$ coincides with the normalized Ricci potential. Now we use the same arguments as in the previous section to conclude that the flow converges in $C^{\infty}$ to a hermitian metric $\varphi_{\infty}$ on $\Oh_E(r)$ which is positive along the fibers.  
\end{proof}

\begin{remark}
As an observation we remark that the hermitian metric on $K_{\X/S}^{-1}$ given by the Monge-Amp\`{e}re measure 
$(dd^c\varphi_t)^n/n!$ splits in the limit into the product metric 
$$
e^{-\varphi_{\infty}}\cdot \left(\int{|u_s|^2 e^{-\varphi_{\infty}}}\right)^{-1},
$$ 
which is of course what we expect in the K\"ahler-Einstein limit.
\end{remark}
\begin{remark}
The construction works for any holomorphic vector bundle $E \to S$. In particular if $\det E$ is trivial, then the bundle $K_{\X/S}^{-1}$ is canonically isomorphic to $\Oh_E(r)$ and the flow (\ref{flow}) coincides with the normalized flow in \cite{Ber13}. 
\end{remark}

\subsection{The evolution equation for the geodesic curvature function}

Now we turn back to the Griffiths problem and start with a positive hermitian metric $h$ on $\Oh_E(1)$. Then we run the flow $(\ref{flow})$ on $\Oh_E(r)$ for the positive initial metric $h^r=e^{-\varphi_0}$. In order to apply Theorem \ref{thm}
to the limit metric, we are left to proof that the positivity of the geodesic curvature function $c(\varphi_t)$ is preserved under the flow (\ref{flow}). (Note that $e^{-\varphi_t}$ is now the metric on $\Oh_E(r)$.) Hence we need to study how it evolves along the flow. For this we can again assume that our base manifold $S$ is a one-dimensional disc. By using the computation from \cite{Ber13}, we get the following evolution equation for $c(\varphi)$:

\begin{proposition} 
On a fiber $X_s$ we have
\begin{equation}
\label{evolv}
\frac{\dl c(\varphi_t)}{\dl t} = -\Box_{\omega_t} c(\varphi_t) + rc(\varphi_t) + |A^{\varphi_t}_s|^2_{\omega_t}  
+\dl_s\dl_{\sbar} \log \int_{X_s}{|u_s|^2e^{-\varphi_t}}
\end{equation}
\end{proposition}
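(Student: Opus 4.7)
The idea is to differentiate $c(\varphi_t)_{s\bar s}=\varphi_{t,s\bar s}-g_t^{\bar\beta\alpha}\varphi_{t,s\bar\beta}\varphi_{t,\alpha\bar s}$ in $t$, substitute the flow equation, and reduce the result to a Bochner--Kodaira identity along the fibers.

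First I would unpack the flow equation in a local trivialization of $\Oh_E(r)$ adapted to the frame $u=dz\otimes s$. With this choice the two measures become $\MA(\varphi_t)=V^{-1}\det(\varphi_{t,\alpha\bar\beta})\,c_n\,dz\wedge d\bar z$ and $\mu_{\varphi_t}=\bigl[e^{-\varphi_t}/\int_{X_s}|u_s|^2 e^{-\varphi_t}\bigr]c_n\,dz\wedge d\bar z$, so that
$$
\dot\varphi_t=\log\det(\varphi_{t,\alpha\bar\beta})+\varphi_t+\log\int_{X_s}|u_s|^2 e^{-\varphi_t}-\log V.
$$

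Second, I would compute $\partial_t c(\varphi_t)$ using $\partial_t g_t^{\bar\beta\alpha}=-g_t^{\bar\beta\gamma}\dot\varphi_{t,\gamma\bar\delta}g_t^{\bar\delta\alpha}$, and verify by a direct manipulation that the result equals $dd^c\dot\varphi_t(v_s,v_{\bar s})/\sqrt{-1}$, the value of $dd^c\dot\varphi_t$ on the horizontal lifts $v_s=\partial_s+a_s^\alpha\partial_\alpha$ introduced before Theorem~\ref{thm}. By linearity I then substitute the three summands of $\dot\varphi_t$ separately: the term $\log\int|u_s|^2 e^{-\varphi_t}$, pulled back from $S$, contributes only its base derivative $\partial_s\partial_{\bar s}\log\int|u_s|^2 e^{-\varphi_t}$; the term $\varphi_t$ contributes $dd^c\varphi_t(v_s,v_{\bar s})=c(\varphi_t)$ by the very definition of the geodesic curvature; and the term $\log\det(\varphi_{t,\alpha\bar\beta})$ is treated separately below.

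Third, the critical contribution is that of $\log\det(\varphi_{t,\alpha\bar\beta})$. I would establish the identity
$$
dd^c\log\det(\varphi_{t,\alpha\bar\beta})(v_s,v_{\bar s})=-\Box_{\omega_t}c(\varphi_t)+|A_s^{\varphi_t}|^2_{\omega_t}+(r-1)c(\varphi_t)
$$
by running the computation of Proposition~\ref{elleq} with the key replacement step (substituting $\det g=h^r(f^*G)^{-1}$ from the isometry hypothesis of Theorem~\ref{thm}) left unperformed; the identity then follows from the standard manipulation of fourth-order fiber derivatives of $\varphi_t$ together with the definition $A_{s,\bar\beta}^\alpha=\partial_{\bar\beta}a_s^\alpha$ of the Kodaira--Spencer tensor. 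As a sanity check, substituting this identity into the expression of Step~2 and imposing the hypothesis of Theorem~\ref{thm} (under which $dd^c\log\det(\varphi_{t,\alpha\bar\beta})(v_s,v_{\bar s})=-c(\varphi_t)+f^*R^{\det E}_G$) recovers the elliptic equation $(\Box_{\omega_s}-r)c(\varphi)=|A|^2-f^*R^{\det E}_G$. Summing the three contributions then yields \eqref{evolv}, the coefficient $r$ appearing as $(r-1)+1$, from the third and second summands respectively.

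The main obstacle is the Bochner--Kodaira identity of the third step. Although it parallels Proposition~\ref{elleq}, one must keep the $\log\det$ piece explicit throughout and verify that the fourth-order fiber derivatives it produces regroup precisely into $-\Box_{\omega_t}c(\varphi_t)+|A_s^{\varphi_t}|^2_{\omega_t}+(r-1)c(\varphi_t)$, without relying on the simplifications afforded by the isometry hypothesis. Once this identity is in hand, the evolution equation~\eqref{evolv} follows immediately.
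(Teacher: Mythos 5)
Your strategy---differentiating $c(\varphi_t)$ in $t$, identifying $\dl_t c(\varphi_t)$ with $dd^c\dot\varphi_t(v_s,v_{\sbar})$ evaluated on the horizontal lifts, and then feeding in the summands of $\dot\varphi_t$---is precisely the computation of Berman that the paper invokes without reproducing (its entire proof is the citation to \cite{Ber13}), and your Steps 1 and 2 are correct. The gap is the key identity of Step 3. The formula
$$
dd^c\log\det(\varphi_{t,\alpha\bar\beta})(v_s,v_{\sbar})=-\Box_{\omega_t}c(\varphi_t)+|A_s^{\varphi_t}|^2_{\omega_t}+(r-1)c(\varphi_t)
$$
cannot be a general identity: replace $\varphi_t$ by $\lambda\varphi_t$ for a constant $\lambda>0$. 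Then $g_{\alpha\bar\beta}\mapsto\lambda g_{\alpha\bar\beta}$, while $a_s^\alpha=-g^{\bar\beta\alpha}\varphi_{t,s\bar\beta}$, hence $v_s$ and $A_s$, are unchanged; $\log\det g$ changes by an additive constant, $\Box_{\omega_t}$ scales by $\lambda^{-1}$ and $c(\varphi_t)$ by $\lambda$. So the left-hand side and the first two terms on the right are invariant, whereas $(r-1)c(\varphi_t)$ is multiplied by $\lambda$. The correct general identity carries no zeroth-order term,
$$
dd^c\log\det(\varphi_{t,\alpha\bar\beta})(v_s,v_{\sbar})=-\Box_{\omega_t}c(\varphi_t)+|A_s^{\varphi_t}|^2_{\omega_t};
$$
note also that rerunning the proof of Proposition \ref{elleq} ``without the replacement step'' does not produce the left-hand side above, because that proof only ever computes the pure base--base derivative $\dl_s\dl_{\sbar}\log\det g$ and uses the isometry hypothesis twice more (harmonicity of $A$ and the Einstein condition $R_{\alpha\bar\beta}=(n+1)g_{\alpha\bar\beta}$) to dispose of the divergence and Ricci terms; in the full contraction against $v_s\wedge v_{\sbar}$ those terms are instead absorbed by the cross-terms $a_s^\alpha\dl_\alpha\dl_{\sbar}\log\det g$, etc., leaving exactly $-\Box c+|A|^2$.

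The consequence is that your decomposition, done correctly, yields
$$
\dl_tc(\varphi_t)=-\Box_{\omega_t}c(\varphi_t)+c(\varphi_t)+|A_s^{\varphi_t}|^2_{\omega_t}+\dl_s\dl_{\sbar}\log\int_{X_s}|u_s|^2e^{-\varphi_t},
$$
the single $+c(\varphi_t)$ coming entirely from the factor $e^{-\varphi_t}$ in $\mu_{\varphi_t}$. The coefficient $r$ in the stated Proposition emerges only upon passing to the associated weight $\varphi_t/r$ on $\Oh_E(1)$, under which $c$ is divided by $r$ and $\Box$ is multiplied by $r$, so that $(\Box_{\omega_t}-1)c(\varphi_t)$ becomes $(\Box-r)c$ in the $\Oh_E(1)$-normalization; this is also the only sense in which the stationary equation agrees with (\ref{ell}), which was derived for a metric on $\Oh_E(1)$. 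Your sanity check at the end of Step 3 appears to recover (\ref{ell}) only because it mixes these two normalizations: carried out consistently in the $\Oh_E(1)$-normalization, your version of the identity would give $(\Box-r^2)c$ at the stationary point rather than $(\Box-r)c$, which shows concretely that the extra $(r-1)c(\varphi_t)$ must not be extracted from the $\log\det$ contribution.
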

Note that in the limit we get indeed equation (\ref{ell}). 
For the last term on the right hand side, which is minus the curvature of 
$$
 \int{|u_s|^2 e^{-\varphi_t}},
$$
we can apply for instance the curvature formula from \cite{Be15}. But if $c(\varphi_t)$ is positive, this gives a negative contribution on the right hand side, which is an obstruction to apply the usual maximum principles for the heat equation.
Thus the question whether the positivity of $c(\varphi_0)$ is preserved or not remains open.


\begin{thebibliography}{[Be09]}
\bibitem[Be09]{Be09}{\scshape B.~Berndtsson}: {\em Curvature of vector bundles associated to holomorphic fibrations}, Ann. Math. \textbf{169}, 531-560 (2009).
\bibitem[Be11]{Be11}{\scshape B.~Berndtsson}: {\em Strict and nonstrict positivity of direct image bundles}, Math. Zeitschrift, Vol. \textbf{269}, 1201-1218 (2011).
\bibitem[Be13]{Be13}{\scshape B.~Berndtsson}: {\em Convexity in the space of K\"ahler metrics}, Annales de la faculté des sciences de Toulouse Mathématiques, Vol.: \textbf{22}, Issue: 4, 713-746 (2013).
\bibitem[Be15]{Be15}{\scshape B.~Berndtsson}: {\em Minkowski type inequality for Fano manifolds and some uniqueness theorems in K\"ahler geometry}, Inventiones Mathematicae 200, 1 (2015), pp 149-200
\bibitem[Ber13]{Ber13}{\scshape R.~Berman}: {\em Relative K\"ahler-Ricci flows and their quantization},
Analysis and PDE \textbf{6} (2013), 1, 131-180.
\bibitem[CP90]{CP90}{\scshape F.~Campana, H.~Flenner}: {\em A characterization of ample vector bundles on a curve}, Math. Ann. \textbf{287}(1990), no. 4, 571-575.
\bibitem[CS12]{CS12}{\scshape T.~Collins, G.~Szekelyhidi}: {\em The twisted K\"ahler-Ricci flow}, preprint, arXiv:1207.5441 (2012).
\bibitem[De12]{De12}{\scshape J.-P.~Demailly}: {\em Complex Analytic and Differential Geometry}, 
https://www-fourier.ujf-grenoble.fr/~demailly/manuscripts/agbook.pdf, Grenoble (2012).
\bibitem[DS79]{DS79}{\scshape J.-P..~Demailly, H.~Skoda}: {\em Relations entre les notions de positivité de P.A. Griffiths et de S. Nakano}, Séminaire P. Lelong-H. Skoda (Analyse), année 1978/79, Lecture notes in Math., no \textbf{822}, Springer-Verlag, Berlin (1980), 304-309.
\bibitem[Gr69]{Gr69}{\scshape P. A. ~Griffiths}: {\em Hermitian differential geometry, Chern classes and positive vector bundles}, Global Analysis, papers in honor of K. Kodaira, Princeton Univ. Press, Princeton (1969), 181-251.
\bibitem[Ha66]{Ha66}{\scshape R.~Hartshorne}: {\em Ample vector bundles}, 
Publ. Math. I.H.E.S. \textbf{29} (1966), 319-350.
\bibitem[LSY13]{LSY13}{\scshape K.~Liu, X.~Sun, X.~Yang}: {\em Positivity and vanishing theorems for ample vector bundles}, J. Algebraic Geom. \textbf{22} (2013), 303-331.
\bibitem[PS06]{PS06}{\scshape D.~Phong, J.~Sturm}: {\em On stability and the convergence of the K\"ahler-Ricci flow}, J. Differential geom., \textbf{72} (2006), no. 1, 149-168.
\bibitem[PSSW09]{PSSW09}{\scshape D.~Phong, J.~Song, J.~Sturm, B.~Weinkove}: {\em The K\"ahler-Ricci flow and the $\dbar$ operator on vector fields}, J. Differential Geom., \textbf{81} (2009), no.3, 631-647.
\bibitem[Sch93]{Sch93}{\scshape G.~Schumacher}: {\em The curvature of the Petersson-Weil metric on the moduli space of K\"ahler-Einstein manifolds}, Ancona, V. (ed.) et al., Complex analysis and geometry. New York: Plenum Press. The University Series in Math., 339-354 (1993).
\bibitem[TW03]{TW03}{\scshape W.-K.~To, L.~Weng}: {\em $L^2$-metrics, projective flatness and families of polarized abelian varieties},  Transactions of the American Mathematical Society, \textbf{356} (2003), no.7,  2685--2707.
\bibitem[Sch12]{Sch12}{\scshape G.~Schumacher}: {\em Positivity of relative canonical bundles and applications}, Invent. Math. \textbf{190}, 1-56 (2012).
\bibitem[TZZZ11]{TZZZ11}{\scshape  G.~Tian, S.~Zhang, Z.~Zhang, X.~Zhu}: {\em Supremun of Perelman's Entropy and K\"ahler-Ricci flow on a Fano manifold}, preprint, arXiv:1107.4018 (2011).
\bibitem[TZ06]{TZ06}{\scshape  G.~Tian, X.~Zhu}: {\em Convergence of the K\"ahler-Ricci flow on Fano manifolds},  J. Amer. Math. Soc., \textbf{17} (2006), 675-699.
\bibitem[TZ11]{TZ11}{\scshape  G.~Tian, X.~Zhu}: {\em Convergence of the K\"ahler-Ricci flow on Fano manifolds, II},  preprint, arXiv:1102.4798 (2011).
\bibitem[Um73]{Um73}{\scshape  H.~Umemura}: {\em Some results in the theory of vector bundles},  Nagoya Math. J. \textbf{52} (1973), 97-128.


\end{thebibliography}
\end{document}